\newcommand{\vtx}[1]{*+[o][F-]{\scriptscriptstyle #1}} 
\newcounter{num}[section] %
\newenvironment{theo}
{\refstepcounter{num}%
\bigskip\noindent{\bf Theorem~\arabic{section}.\arabic{num}. }\it}
\newenvironment{cor}
{\refstepcounter{num}%
\bigskip\noindent{\bf Corollary~\arabic{section}.\arabic{num}. }\it}
\newenvironment{lemma}
{\refstepcounter{num}%
\bigskip\noindent{\bf Lemma~\arabic{section}.\arabic{num}. }\it}
\newenvironment{example}
{\refstepcounter{num}%
\bigskip\noindent{\bf Example~\arabic{section}.\arabic{num}.}}
\newenvironment{conj}
{\refstepcounter{num}%
\bigskip\noindent{\bf Conjecture~\arabic{section}.\arabic{num}. }\it}
\newenvironment{remark}
{\refstepcounter{num}%
\bigskip\noindent{\bf Remark~\arabic{section}.\arabic{num}.}}
\newcommand{\definition}[1]
{\refstepcounter{num}%
\bigskip\noindent{\bf Definition~\arabic{section}.\arabic{num}}~({\it #1}).}
\newcommand{\defin}{\refstepcounter{num}%
\bigskip\noindent{\bf Definition~\arabic{section}.\arabic{num}.}}
\newcommand{\Ref}[1]{(\ref{#1})}
\newcounter{thepic}
\newenvironment{eq}{\begin{equation}}{\end{equation}}
\newcommand{\si}{\sigma}
\newcommand{\al}{\alpha}
\newcommand{\be}{\beta}
\newcommand{\la}{\lambda}
\newcommand{\de}{\delta}
\newcommand{\LA}{\langle}
\newcommand{\RA}{\rangle}
\newcommand{\tr}{\mathop{\rm tr}}
\newcommand{\degm}[1]{\mathop{\rm deg^{-}}(#1)} 
\newcommand{\degp}[1]{\mathop{\rm deg^{+}}(#1)} 
\newcommand{\Char}{\mathop{\rm char}}
\newcommand{\sign}{\mathop{\rm{sgn }}}
\newcommand{\Hom}{{\mathop{\rm{Hom }}}}
\newcommand{\Sp}{S\!p}
\newcommand{\FF}{{\mathbb{F}}}   
\newcommand{\NN}{{\mathbb{N}}}
\newcommand{\ZZ}{{\mathbb{Z}}}   
\newcommand{\Q}{\mathcal{Q}}    
\newcommand{\n}{\boldsymbol{n}} 
\newcommand{\m}{\boldsymbol{m}} 
\renewcommand{\i}{\boldsymbol{i}} 
\newcommand{\D}{{D}}
\newcommand{\algA}{\mathcal{A}}    
\newcommand{\Sym}{{\mathcal{S}} }  
\newcommand{\M}{\mathcal{M}}  
\newcommand{\N}{\mathcal{N}} 
\begin{document}
\renewcommand{\refname}{References}
\thispagestyle{empty}

\title{Free relations for matrix invariants in modular case}%
\author{{A.A. Lopatin}}%
\noindent\address{\noindent{}Artem A. Lopatin%
\newline\hphantom{iiii} Omsk Branch of
\newline\hphantom{iiii} Sobolev Institute of Mathematics, SB RAS,
\newline\hphantom{iiii} Pevtsova street, 13,
\newline\hphantom{iiii} 644099, Omsk, Russia%
\newline\hphantom{iiii} http://www.iitam.omsk.net.ru/\~{}lopatin}%
\email{artem\underline{ }lopatin@yahoo.com}%

\vspace{1cm}
\maketitle {\small
\begin{quote}
\noindent{\sc Abstract. } A classical linear group $G<GL(n)$ acts on $d$-tuples of $n\times n$ matrices by simultaneous conjugation. Working over an infinite field of characteristic different from two we establish that the ideal of free relations, i.e. relations valid for matrices of any order, between generators for matrix $O(n)$-~and $\Sp(n)$-invariants is zero. We also prove similar result for invariants of mixed representations of quivers. 

These results can be considered as a generalization of the characteristic isomorphism ${\rm ch}:\Sym\to J$ between the graded ring $\Sym=\otimes_{d=0}^{\infty} \Sym_d$, where $\Sym_d$ is the character group of the symmetric group $S_d$, and the inverse limit $J$ with respect to $n$ of rings of symmetric polynomials in $n$ variables.  

As a consequence, we complete the description of relations between generators for  $O(n)$-invariants as well as the description of relations for invariants of mixed representations of quivers. We also obtain an independent proof of the result that the ideal of free relations for $GL(n)$-invariants is zero, which was proved by Donkin in [Math. Proc. Cambridge Philos. Soc. 113 (1993), 23--43].

\medskip

\noindent{\bf Keywords: } invariant theory, polynomial invariants, classical linear groups, polynomial identities.

\noindent{\bf 2010 MSC: } 13A50; 16R30; 16G20.
\end{quote}
}

\section{Introduction}\label{section_intro}

We assume that $\FF$ is an infinite field of arbitrary characteristic $p=\Char{\FF}$. All vector spaces, algebras and modules are over $\FF$ and all algebras are associative with unity unless otherwise stated.

\subsection{Matrix invariants}
 
Consider a group $G$ from the list $GL(n)$, $O(n)=\{A\in \FF^{n\times n}\,|\,A A^T =E\}$, $\Sp(n)=\{A\in \FF^{n\times n}\,|\,AA^{\ast}=E\}$,  where we assume that $p\neq2$ in case $G$ is $O(n)$ and $n$ is even in the case of $\Sp(n)$. Here $\FF^{n\times n}$ is the space of $n\times n$ matrices over $\FF$ and $A^{\ast}=-J A^T J$ is the symplectic transpose of $A$, where   %
$J=\left(
\begin{array}{cc}
0& E \\
-E& 0\\
\end{array}
\right)$ is the matrix of the skew-symmetric bilinear form. The group $G$ acts on $V=(\FF^{n\times n})^{\oplus d}$ by the diagonal conjugation: 
$$g\cdot (A_1,\ldots,A_d)=(gA_1g^{-1},\ldots,gA_dg^{-1})$$
for $g\in G$ and $A_1,\ldots,A_d$ in $\FF^{n\times n}$.
The coordinate algebra of $V$ is the polynomial ring 
$$R=\FF[x_{ij}(k)\,|\,1\leq i,j\leq n,\, 1\leq k\leq d]$$ 
in $n^2 d$ variables. The ring $R$ is generated by the entries of {\it generic matrices} $X_k=(x_{ij}(k))_{1\leq i,j\leq n}$ ($1\leq k\leq d$). The action of $G$ on $V$ induces the action on $R$ as follows:
$$g\cdot x_{ij}(k)=(i,j)^{{\rm th}}\;{\rm entry}\; {\rm of }\;g^{-1}X_k\,g.$$%
Denote by $R^G$ the algebra of {\it matrix $G$-invariants}, where
$$f\in R^G \;{\rm iff }\;g\cdot f=f \;{\rm for\; all }\; g\in G.$$
Consider an arbitrary $n\times n$ matrix $X$. Denote coefficients in the characteristic polynomial
of $X$ by $\sigma_t(X)$, i.e., %
$$\det(X+\lambda E)=\sum_{t=0}^{n} \lambda^{n-t}\sigma_t(X).$$
So, $\sigma_0(X)=1$, $\sigma_1(X)=\tr(X)$ and $\sigma_n(X)=\det(X)$. 

The following definitions were given in~\cite{Lopatin_Orel}. Let $\M$ be the monoid (without unity) freely generated by {\it letters}  
\begin{enumerate}
\item[$\bullet$] $x_1,\ldots,x_d$, if $G=GL(n)$;

\item[$\bullet$] $x_1,\ldots,x_d,x_1^T,\ldots,x_d^T$, otherwise.
\end{enumerate} 
Assume that $a=a_1\cdots a_r$ and $b$ are elements of $\M$, where $a_1,\ldots,a_r$ are letters. 
\begin{enumerate}
\item[$\bullet$] Introduce an involution ${}^T$ on $\M$ as follows. If $G=GL(n)$, then $a^T=a$. Otherwise, define $b^{TT}=b$ for a letter $b$ and $a^T=a_r^T\cdots a_1^T\in\M$. 

\item[$\bullet$] We say that $a$ and $b$ are {\it cyclic equivalent} and write $a\stackrel{c}{\sim} b$
if there exists a cyclic permutation $\pi\in S_r$ such that
$a_{\pi(1)}\cdots a_{\pi(r)}=b$.  If $a\stackrel{c}{\sim} b$ or $a\stackrel{c}{\sim} b^T$, then we say that $a$ and $b$ are {\it equivalent} and write $a\sim b$.
\end{enumerate}
An element from $\M$ is called {\it primitive} if it is not equal to the power of a shorter monomial. 
\begin{enumerate}
\item[$\bullet$] Let $\N\subset\M$  be the subset of primitive elements.

\item[$\bullet$] Let $\N_{\si}$ be a ring with unity of (commutative) polynomials over $\FF$ freely generated by ``symbolic'' elements $\si_t(a)$, where $t>0$ and $a\in\N$ ranges over $\sim$-equivalence classes. 
\end{enumerate}
We will use the following conventions: $\si_0(a)=1$ and $\si_1(a)=\tr(a)$, where $a\in\N$.  For a letter $b\in\M$ define
$$X_{b}=
\left\{
\begin{array}{rl}
X_{k},&\text{if } b=x_k\\
X_{k}^T,&\text{if } b=x_k^T \text{ and }G=O(n)\\
X_{k}^{\ast},&\text{if } b=x_k^T \text{ and }G=\Sp(n)\\
\end{array}
\right..
$$
Given $a=a_1\cdots a_r\in\M$, where $a_i$ is a letter, we set $X_{a}=X_{a_1}\cdots X_{a_r}$. 
It is known that the algebra of matrix $G$-invariants $R^G\subset R$ is generated over $\FF$ by $\sigma_t(X_a)$, where $1\leq t\leq n$ and $a\in\N$.  These results were established in~\cite{Sibirskii_1968},~\cite{Procesi76} in characteristic zero case and in~\cite{Donkin92a},~\cite{Zubkov99} in the general case. Note that if in the case of $p=0$ we drop the restriction that $a$ is primitive, then it is enough to take $\tr(X_a)$ instead of $\sigma_t(X_a)$, $1\leq t\leq n$, in the description of generators for $R^G$. Relations between the mentioned generators were established by Razmyslov~\cite{Razmyslov74},~Procesi~\cite{Procesi76} in case $p=0$ and Zubkov~\cite{Zubkov96} in case $G=GL(n)$ and $p>0$. 

Consider a surjective homomorphism 
$$\Psi_n:\N_{\si} \to R^G$$ %
defined by $\si_t(a) \to \si_t(X_a)$, if $t\leq n$, and $\si_t(a) \to 0$ otherwise. Note that for any $n\times n$ matrices $A,B$ over $R$ and $1\leq t\leq n$ we have $\si_t(A^{\de})=\si_t(A)$, $(A^{\de})^{\de}=A$, and $(AB)^{\de}=B^{\de}A^{\de}$, where $\de$ stands for the transposition or symplectic transposition. Hence, the map $\Psi_n$ is well defined. Its kernel $K_{n}$ is the ideal of {\it relations} for $R^G$. Elements of $$K_{\infty}=\bigcap_{i>0} K_{i}$$ 
are called {\it free} relations. In other words, a relation between the above mentioned generators for $R^G$ is called free if it is valid for $n\times n$ generic matrices for an arbitrary $n>0$.  In characteristic zero case all free relations are zero (for example, see~\cite{Procesi76}). We generalize this result to the case of arbitrary characteristic different from two: 

\begin{theo}\label{theo_free} If $G$ is $O(n)$ or $\Sp(n)$ and $p\neq2$, then the ideal $K_{\infty}$ of free relations for $R^{G}$ is zero.
\end{theo}
\bigskip%

\noindent{}This theorem is proven at the end of Section~\ref{section_O_lin}. As a consequence, we obtain an independent proof of the result by Donkin~\cite{Donkin93a} that for an arbitrary $p$ there is no free relations for $R^{GL(n)}$ (see Remark~\ref{remark_Donkin}).  The following conjecture is discussed in Remark~\ref{remark_conj}.

\begin{conj}\label{conj1} If $p=2$, then the ideal $K_{\infty}$ of free relations for $R^{\Sp(n)}$ is generated by $\si_t(a)$ for $a\in\N$ satisfying $a\stackrel{c}{\sim}a^T$ and odd $t>0$.
\end{conj}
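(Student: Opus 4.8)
The plan is to prove the two inclusions separately. Write $L\subseteq\N_{\si}$ for the ideal generated by the proposed elements $\si_t(a)$ with $a\stackrel{c}{\sim}a^T$ and odd $t>0$. The inclusion $L\subseteq K_{\infty}$ amounts to checking that each such $\si_t(a)$ is a genuine free relation, while the inclusion $K_{\infty}\subseteq L$ asserts that these are the only free relations. I expect the first inclusion to be a direct Pfaffian computation and the second to carry essentially all of the difficulty.

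For $L\subseteq K_{\infty}$ I would first reduce the combinatorics. A short argument shows that $a\stackrel{c}{\sim}a^T$ if and only if $a\stackrel{c}{\sim}w w^T$ for some $w\in\M$: indeed, if $a\stackrel{c}{\sim}a^T$, then matching the rotation taking $a$ to $a^T$ position by position lets one split $a=uv$ with $u^T=u$ and $v^T=v$; each such factor has even length and is of the form $u=w_1w_1^T$, $v=w_2w_2^T$, and the cyclic rotation $w_1^T\,w_2 w_2^T\,w_1=w w^T$ with $w=w_1^Tw_2$ is self-transpose. Since $\si_t(X_a)$ depends only on the $\sim$-class, it suffices to treat $a=ww^T$, so that $X_a=AA^{\ast}$ with $A=X_w$ and $A^{\ast}=-JA^TJ$. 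The key point is the identity
$$\det(\la E+AA^{\ast})={\rm Pf}(\la J+AJA^T)^2,$$
valid over $\ZZ$ and hence in characteristic two: the matrices $J$ and $AJA^T$ are both alternating (for the latter, $x^T(AJA^T)x=(A^Tx)^TJ(A^Tx)=0$ since $J$ is alternating), so $\la J+AJA^T$ is alternating of even size $n$ and its determinant is the square of its Pfaffian. Writing ${\rm Pf}(\la J+AJA^T)=\sum_k c_k\la^k$, the Frobenius identity in characteristic two gives $\det(\la E+AA^{\ast})=\sum_k c_k^2\,\la^{2k}$, a polynomial in $\la^2$. Comparing with $\det(\la E+X_a)=\sum_t\la^{n-t}\si_t(X_a)$ and using that $n$ is even, all coefficients $\si_t(X_a)$ with $t$ odd vanish, for every $n$; hence $\si_t(a)\in K_{\infty}$.

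For the reverse inclusion $K_{\infty}\subseteq L$ I would follow the strategy used for Theorem~\ref{theo_free}, adapted to the Pfaffian phenomenon. After passing to the quotient, the assertion is that the induced map $\N_{\si}/L\to\prod_{n>0}R^{\Sp(n)}$, $f\mapsto(\Psi_n(f))_n$, is injective. Since every $\Psi_n$ is homogeneous for the multidegree in $X_1,\dots,X_d$, a free relation decomposes into multihomogeneous pieces, and it is enough, for each fixed multidegree, to separate a basis of the corresponding finite-dimensional component of $\N_{\si}/L$ by the maps $\Psi_n$ for $n$ large relative to that degree. I would build this separation from a linearized (multilinear) reduction together with the first fundamental theorem for $\Sp(n)$-invariants, producing, for each basis monomial surviving modulo $L$, an explicit tuple of matrices on which exactly that monomial is nonzero. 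In characteristic two this calls for a version of the characteristic map ${\rm ch}$ of the introduction that builds in the Pfaffian, so that the ``square root'' quantities ${\rm Pf}(\la J+AJA^T)$ rather than the $\si_t(X_a)$ become the primitive objects of the limit.

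The main obstacle is precisely this last point. In characteristic two the ring $R^{\Sp(n)}$ carries Pfaffian-type invariants, and one must show that the only stable (i.e.\ $n$-independent) consequences of their presence are the first-order relations $\si_t(ww^T)=0$ generating $L$, with no hidden higher free relations created by the squaring ${\rm Pf}^2=\det$. Controlling these requires an explicit description of the relation ideals $K_n$ in characteristic two (an $\Sp$-analogue of Zubkov's relations) together with a proof that, modulo $L$, the remaining generators $\si_t(a)$ behave as algebraically independent elements in the inverse limit. Establishing a separating family of specializations that detects everything outside $L$ --- equivalently, the injectivity of the Pfaffian-corrected characteristic map --- is the crux of the argument.
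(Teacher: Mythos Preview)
The statement you are attempting is Conjecture~1.2, and the paper does \emph{not} prove it; it is left open. The only discussion the paper offers is Remark~5.5, which establishes just the case $t=1$ of the inclusion $L\subseteq K_{\infty}$ (namely $\tr(AA^{\ast})=0$ by a direct calculation) and observes that $\si_2(x_ix_i^T)$ is not a free relation, as motivation for restricting to odd $t$.

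Your Pfaffian argument for $L\subseteq K_{\infty}$ is correct and in fact goes further than what the paper records. Using $J^{-1}=-J$ and $\det J=1$ one has $\la E+AA^{\ast}=(\la J+AJA^T)J^{-1}$, hence $\det(\la E+AA^{\ast})=\det(\la J+AJA^T)={\rm Pf}(\la J+AJA^T)^2$ over $\ZZ$ (for even $n$), and the Frobenius squaring in characteristic two forces all odd-index $\si_t(AA^{\ast})$ to vanish. Combined with the reduction $a\stackrel{c}{\sim}a^T\Rightarrow a\stackrel{c}{\sim}ww^T$ (which is exactly the paper's Lemma~2.2(b) together with Lemma~2.3(c)), this settles the inclusion $L\subseteq K_{\infty}$ for all odd $t$, not only $t=1$.

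For the reverse inclusion $K_{\infty}\subseteq L$, what you have written is a program rather than a proof, and you yourself flag the obstruction. The paper's linearization machinery breaks down precisely here: in Lemma~3.7 the conclusion for $p=2$ is $\partial_q(f_i)=0$ whenever $a\stackrel{c}{\sim}a^T$, so the derivation method of Sections~3--5 cannot separate monomials built from such $a$. Your proposed remedy of passing to Pfaffian-type primitives and proving injectivity of a ``Pfaffian-corrected characteristic map'' is a reasonable outline, but it presupposes a characteristic-two $\Sp$-analogue of Zubkov's relation theorem and a separating family of specializations modulo $L$, neither of which is available. This direction is a genuine open problem, and your sketch does not close it.
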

\bigskip

Applying Theorem~\ref{theo_free} to Theorem~1.1 from~\cite{Lopatin_Orel}, which was proved using an approach from~\cite{ZubkovII}, we complete the description of relations between generators for $R^{O(n)}$:

\begin{theo}\label{theo_relations} If $G=O(n)$, then the ideal of relations $K_{n}$ for $R^{O(n)}\simeq  \N_{\si}/K_{n}$ is generated by $\sigma_{t,r}(a,b,c)$, where $t+2r>n$ ($t,r\geq0$) and $a,b,c$ are linear combinations of elements from $\M$.
\end{theo}
\bigskip%

\noindent{}The exact definition of $\sigma_{t,r}(a,b,c)\in\N_{\si}$ can be found in Section~3 of~\cite{Lopatin_Orel}. Note that the function $\sigma_{t,r}$ was introduced by Zubkov in~\cite{ZubkovII} and it relates to the {\it determinant-pfaffian} from~\cite{LZ1} in the same way as $\sigma_t$ relates to the determinant. More details on the different approaches to the definition of $\sigma_{t,r}$ can be found in Section~1.3 of~\cite{Lopatin_Orel}. Let us remark that the definition of $\sigma_{t,r}$ from~\cite{Lopatin_Orel} is slightly different from the original definition from~\cite{ZubkovII} (see Lemma~7.14 of~\cite{Lopatin_Orel} for details). 

The description of relations for $R^{O(n)}$ was applied to the special case of $n=3$  in~\cite{Lopatin_O3} and~\cite{Lopatin_skewO3}.


\subsection{Mixed representations of quivers}

The notion of {\it supermixed} representations of a quiver was introduced by Zubkov~\cite{Zubkov_preprint} and~\cite{ZubkovI}. It is equivalent to the notion of representations of a {\it signed} quiver considered by Shmelkin in~\cite{Shmelkin}. {\it Orthogonal} and {\it symplectic} representations of {\it symmetric} quivers studied by Derksen and Weyman in~\cite{DW02} as well as {\it mixed} representations of quivers are partial cases of supermixed representations. More details can be found in Section~2.1 of~\cite{Lopatin_so_inv}.  

Section~\ref{section_mixed} is dedicated to the algebra of invariants $I(\Q,\n,\i)$ of mixed representations of a quiver $\Q$. As example, a special case of $I(\Q,\n,\i)$ is the algebra of invariants of bilinear forms on vector spaces $V=\FF^n$ and $V^{\ast}$ under the action of $GL(n)$ as base change (see Example~\ref{ex_forms}).   

Zubkov established generators for $I(\Q,\n,\i)$ in~\cite{ZubkovI} and  
described relations between generators modulo free relations in~\cite{ZubkovII}. In Lemma~\ref{lemma_free_mixed} we show that there are no non-zero free relations for $I(\Q,\n,\i)$ and in Theorem~\ref{theo_mixed} we complete the description of relations for $I(\Q,\n,\i)$. Applying Theorem~\ref{theo_mixed}, in upcoming paper~\cite{Lopatin_form2} we will explicitly describe a minimal generating set for invariants of bilinear forms in dimension two case.

\section{Auxiliaries}\label{section_auxiliary}

Denote the degree of $a\in\M$ by $\deg{a}$, the degree of $a$ in a letter $b$ (i.e., the number of appearances of the letters $b$ and $b^T$ in $a$) by $\deg_{b}{a}$. For $t>0$ we set $\deg{\si_t(a)}=t\deg{a}$ and define $\deg{f}$ for a monomial $f\in\N_{\si}$ in the natural way. 

Assume that $m$ is a positive integer. Given $i\in\ZZ$, we write $|i|_m$ for $1\leq j\leq m$  such that $i\equiv j\,({\rm mod}\; m)$. 

\definition{of $l$-subword} Assume that $a=a_1\cdots a_r$, $b=b_1\cdots b_s\in\M$, and $l>0$, where $a_i,b_j$ are letters for all $i,j$. We say that
\begin{enumerate}
\item[$\bullet$] $a$ is an {\it $l$-subword} of $b$, if $a_i=b_{|l+i-1|_s}$ for all $1\leq i\leq r$; 
\item[$\bullet$] $a$ is an {\it $l^T$-subword} of $b$, if $a_i=b_{|l-i+1|_s}^T$ for all $1\leq i\leq r$.
\end{enumerate}
\bigskip
 
As an example, for $a=x_1x_2x_3^Tx_4\in\M$ we have that $x_3^Tx_4x_1$ is a $3$-subword of $a$ and $x_1^Tx_4^T$ is an $1^T$-subword of $a$.

\begin{remark}\label{remark_subword} If $a\sim b$ for $a,b\in\M$, then $a$ is an $l$-subword of $b$ or $l^T$-subword of $b$ for some $l>0$. 
\end{remark}

\begin{lemma}\label{lemma_subword00} Let $b,c\in\M$. Then
\begin{enumerate}
\item[a)] if $bc=cb$, then there is an $e\in\M$ such that $b=e^i$ and $c=e^j$ for some $i,j>0$. 

\item[b)] if $b=b^T$, then $b=c c^T$ for a $c\in\M$.
\end{enumerate}
\end{lemma}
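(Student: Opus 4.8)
The plan for part a) is the classical free-monoid commutation argument. Since $\M$ is freely generated by the letters, it is a cancellative monoid with the prefix property, and I would argue by induction on the total length $\deg{b}+\deg{c}$. Assume without loss of generality that $\deg{b}\leq\deg{c}$. Reading off the first $\deg{b}$ letters of the equal words $bc$ and $cb$ shows that $b$ is a prefix of $c$. If $b=c$, then $e=b$ works with $i=j=1$. Otherwise $\deg{b}<\deg{c}$, so $c=bc'$ for a nonempty $c'\in\M$; substituting into $bc=cb$ and cancelling $b$ on the left gives $bc'=c'b$. Since $\deg{b}+\deg{c'}<\deg{b}+\deg{c}$, the induction hypothesis yields $e\in\M$ with $b=e^i$ and $c'=e^{j'}$, whence $c=bc'=e^{i+j'}$, completing the step.

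For part b), I would first translate the hypothesis $b=b^T$ into conditions on letters. Writing $b=b_1\cdots b_s$ with each $b_i$ a letter, the definition of the involution gives $b^T=b_s^T\cdots b_1^T$, so $b=b^T$ is equivalent to $b_i=b_{s-i+1}^T$ for all $1\leq i\leq s$. No letter is fixed by ${}^T$ (the generators $x_k$ and $x_k^T$ are distinct), so the case $i=s-i+1$ is impossible; hence $s$ is even, say $s=2m$. I would then set $c=b_1\cdots b_m$. Using $b_i=b_{s-i+1}^T$ with $i=m-j+1$ together with $b^{TT}=b$, one checks $b_{m+j}=b_{m-j+1}^T$ for $1\leq j\leq m$, which is exactly the statement that the second half $b_{m+1}\cdots b_{2m}$ equals $b_m^T\cdots b_1^T=c^T$. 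Therefore $b=c\,c^T$, as required.

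Both parts are elementary. In part a) the only point requiring care is that $\M$ has no unity, so the reduction $c=bc'$ must be accompanied by the separate base case $b=c$; the remaining work is routine free-monoid manipulation. In part b) the crux is the parity observation that $s$ must be even, which rests on the fact that the involution ${}^T$ has no fixed letters; once this is in place, the verification that the first half of $b$ serves as $c$ is a direct index computation.
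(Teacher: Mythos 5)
Your proof is correct. Part a) is essentially the paper's argument: the same induction on $\deg{b}+\deg{c}$, splitting off the base case $b=c$ and otherwise writing the longer word as the shorter one times a nonempty remainder, then cancelling; the only cosmetic difference is that you normalize to $\deg{b}\leq\deg{c}$ where the paper takes $\deg{b}>\deg{c}$. Part b) is where you genuinely diverge: the paper peels one letter off each end and inducts on $\deg{b}$ (which forces it to allow the empty word, informally invoking $\M_1=\M\sqcup\{1\}$ before that notation is introduced), whereas you read $b=b^T$ letterwise as $b_i=b_{s-i+1}^T$, use the absence of ${}^T$-fixed letters to rule out a middle letter and conclude $s=2m$, and then verify directly that $c=b_1\cdots b_m$ works. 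Your version is a little cleaner since it never leaves $\M$ and makes the parity obstruction explicit; both arguments rest on the same key fact that no generator is fixed by the involution (which is also why the statement implicitly excludes the $GL(n)$ case, where ${}^T$ is the identity). No gaps.
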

\begin{proof}
a) The proof is by induction on $\deg{b}+\deg{c}>1$. If $\deg{b}+\deg{c}=2$, then $b=c$ is a letter and the statement is valid.

If $\deg{b}=\deg{c}$, then $b=c$ and the required is proven. Otherwise, without loss of generality we can assume that $\deg{b}>\deg{c}$. Then $b=c b_1$ for a $b_1\in\M$. Thus, $b_1 c=c b_1$. Induction hypothesis completes the proof. 

b) The proof is by induction on $\deg{b}>0$. If $b$ is a letter, then $b\neq b^T$. Otherwise, 
$b=y b_1$ for a letter $y$ and $b_1\in\M$. Since $y b_1 = b_1^T y^T$, we have $b_1 = b_2 y^T$ for a $b_2\in\M_1$. Then $b_2=b_2^T$ and the induction hypothesis implies $b_2=c c^T$ for a $c\in\M_1$. Therefore, $b=y c (y c)^T$.  
\end{proof}

\begin{lemma}\label{lemma_subword} Assume that $a\in\N$ with $\deg{a}=r$ and $1\leq l\leq r$. Then 
\begin{enumerate}
\item[a)] $a$ is an $l$-subword of $a$ if and only if $l=1$;

\item[b)] if $a\stackrel{c}{\not\sim}a^T$, then $a$ is not an $l^T$-subword of $a$;

\item[c)] $a\stackrel{c}{\sim}a^T$ if and only if there exists a $b\in\N$ satisfying $a\stackrel{c}{\sim}b$ and $b=b^T$; 

\item[d)] if $a=a^T$, then $a$ is an $l^T$-subword of $a$ if and only if $l=r$. 
\end{enumerate}
\end{lemma}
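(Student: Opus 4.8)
The plan is to derive parts~(a) and~(d) from the rigidity of primitive words, to prove~(b) by a direct index computation, and to treat~(c) --- which I expect to be the main obstacle --- by a parity argument feeding into a congruence.

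For~(a), I would split $a=uv$ with $u=a_1\cdots a_{l-1}$ and $v=a_l\cdots a_r$. The $l$-subword of $a$ is exactly the rotation $vu$, so the hypothesis reads $uv=vu$. When $l>1$ both $u$ and $v$ are nonempty, and Lemma~\ref{lemma_subword00}(a) then forces $a$ to be a proper power, contradicting $a\in\N$; hence $l=1$, the converse being trivial. Part~(d) reduces to~(a): assuming $a=a^T$, i.e. $a_i=a_{r+1-i}^T$, I substitute this into the defining relation $a_i=a_{|l-i+1|_r}^T$ of an $l^T$-subword, cancel the transposes, and reindex to obtain $a_j=a_{|j+l|_r}$ for all $j$. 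This says $a$ coincides with its own $(|l+1|_r)$-subword, so~(a) gives $|l+1|_r=1$, i.e. $l=r$; the reverse implication is immediate, since for $l=r$ the $l^T$-subword relation is literally $a=a^T$.

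For~(b), I compute directly. Applying $^T$ to the relation $a_i=a_{|l-i+1|_r}^T$ and reindexing by $i=r+1-k$ yields $(a^T)_k=a_{|l+k|_r}$, i.e. $a^T$ is the $(|l+1|_r)$-subword of $a$. Hence if $a$ is an $l^T$-subword of itself then $a\stackrel{c}{\sim}a^T$, and the statement is the contrapositive.

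For~(c), the implication $\Leftarrow$ is short: cyclic equivalence is preserved by $^T$, since if $a$ is the rotation $vu$ of $b=uv$ then $a^T=u^Tv^T$ is a rotation of $b^T=v^Tu^T$; thus $a\stackrel{c}{\sim}b$ together with $b=b^T$ gives $a^T\stackrel{c}{\sim}b^T=b\stackrel{c}{\sim}a$. The forward direction is the heart of the matter. From $a\stackrel{c}{\sim}a^T$ and the computation in~(b), $a$ is an $l^T$-subword of itself for some $l$, unique by~(a); writing $\phi(i)=|l-i+1|_r$, the relation becomes $a_i=a_{\phi(i)}^T$ with $\phi$ an involution of $\{1,\dots,r\}$. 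Because a letter never equals its own transpose, $\phi$ can have no fixed point, which forces both $r$ and $l$ to be even. This parity is precisely what makes the congruence $2c\equiv l+2\ ({\rm mod}\ r)$ solvable; for such a $c$, the rotation $b$ of $a$ starting at position $c$ satisfies $b_i=b_{r+1-i}^T$, i.e. $b=b^T$, and as a rotation of a primitive word it lies in $\N$. The main obstacle is exactly this bookkeeping --- establishing fixed-point-freeness of $\phi$, deducing the parities of $r$ and $l$, and using them to center the palindrome --- after which one could alternatively invoke Lemma~\ref{lemma_subword00}(b) to recognize the resulting self-transpose word.
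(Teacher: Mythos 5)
Your proof is correct, but it takes a genuinely different route from the paper's. The paper treats parts b), c), d) structurally: the $l^T$-subword condition is converted into a factorization $a=a_1a_2$ with $a_1=a_1^T$ and $a_2=a_2^T$, part~b) of Lemma~\ref{lemma_subword00} then gives the normal form $a=c_1c_1^Tc_2c_2^T$, and all three parts follow by manipulating it ($a^T$ is the rotation $c_2c_2^Tc_1c_1^T$ for b); $b=c_1^Tc_2(c_1^Tc_2)^T$ for c); and $b_1b_2=b_2b_1$ with $b_i=c_ic_i^T$ contradicts primitivity via part~a) of Lemma~\ref{lemma_subword00} for d)). You instead work entirely with index arithmetic modulo $r$: for b) the $l^T$-subword relation directly exhibits $a^T$ as the $(|l+1|_r)$-subword of $a$; for d) you compose the two relations and reduce to a); for c) you study the involution $\phi(i)=|l-i+1|_r$, use fixed-point-freeness to force $r$ and $l$ even, and solve $2c\equiv l+2\ ({\rm mod}\ r)$ to center the palindrome. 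I checked the congruences and they are right; your route avoids part~b) of Lemma~\ref{lemma_subword00} entirely, at the cost of more bookkeeping, and it makes explicit the parity constraints that the paper's normal form keeps hidden. Two small points you should make explicit: in c) the step ``$a\stackrel{c}{\sim}a^T$ implies $a$ is an $l^T$-subword of itself'' is the converse of what you computed in b) --- it holds by the same reindexing run backwards (compare Remark~\ref{remark_subword}), but as written you are citing the wrong direction; and the fixed-point-freeness of $\phi$ rests on a letter never equaling its own transpose, which is true only for the orthogonal/symplectic alphabet (the same tacit assumption the paper makes in proving part~b) of Lemma~\ref{lemma_subword00}), so the argument, like the paper's, excludes the $GL(n)$ case where the involution is trivial.
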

\begin{proof}  a) If $a$ is an $l$-subword of $a$ for $1< l\leq m$, then there are $a_1,a_2\in\M$ such that $a=a_1 a_2$ and $a_1a_2=a_2a_1$. Part~a) of Lemma~\ref{lemma_subword00} implies a contradiction.

b) If $a$ is an $l^T$-subword of $a$, then $a=a_1a_2$ for $a_1,a_2\in\M_1$ satisfying $a_1a_2=a_1^T a_2^T$. Thus $a_i=a_i^T$ and, by part~b) of Lemma~\ref{lemma_subword00}, $a_i=c_i c_i^T$ for $c_1,c_2\in\M_1$ ($i=1,2$). Therefore, $a=c_1c_1^T c_2c_2^T\stackrel{c}{\sim} a^T$; a contradiction.

c) Since $a\stackrel{c}{\sim} a^T$, we have $a=a_1a_2$ for $a_1,a_2\in\M_1$ satisfying $a_1a_2=a_1^T a_2^T$. As in the proof of part~b), we obtain $a=c_1c_1^T c_2c_2^T$, where $c_1,c_2\in\M_1$. Then $b=c_1^Tc_2 (c_1^T c_2)^T$ satisfies the required condition.

d) Let $a=a^T$ and $a$ be an $l^T$-subword of $a$ for $1\leq l< m$. As in the proof of part~b), we obtain $a=c_1c_1^T c_2c_2^T$, where $c_1,c_2\in\M$. Then for $b_i=c_ic_i^T$ ($i=1,2$) we have $b_1b_2=b_2b_1$. Part~a) of Lemma~\ref{lemma_subword00} implies that $a=b_1 b_2$ is not primitive; a contradiction.
\end{proof}

\section{Derivations}\label{section_O_derivations}

In this section we assume that $G$ is $O(n)$ or $\Sp(n)$. Given $q>0$, we set
$$\widehat{R}=R\otimes \FF[y_{ij}(k,q)\,|\,1\leq i,j\leq n,1\leq k\leq d, q>0]\;\text{ and } \;Y_{k,q}=(y_{ij}(k,q))_{1\leq i,j\leq n}.$$
Let $G$ act on $\widehat{R}$ by the same way as on $R$:
$$g\cdot y_{ij}(k,q)=(i,j)^{{\rm th}}\;{\rm entry}\; {\rm of }\;g^{-1}Y_{k,q}\,g.$$%
Define a linear map $\partial_q:\widehat{R}\to \widehat{R}$ as follows: given an $f\in\widehat{R}$, we have
$$\partial_q(f)=\sum_{k=1}^d\sum_{1\leq i,j\leq n}\frac{\partial{f}}{\partial{x_{ij}(k)}}\, y_{ij}(k,q),$$
where $\frac{\partial{f}}{\partial{x_{ij}(k)}}$ stands for the partial derivation. As an example, if $f=x_{11}(1)\, y_{22}(2,r)$, then
$$\partial_q(f)=y_{11}(1,q)\, y_{22}(2,r) \text{ and }\partial_2\partial_1(f)=0.$$

\noindent{}For an $n\times n$ matrix $A=(f_{ij})_{1\leq i,j\leq n}$ over $\widehat{R}$ we set $\partial_q(A)=(\partial_q(f_{ij}))_{1\leq i,j\leq n}$.
Obviously, $\partial_q(X_k)=Y_{k,q}$, $\partial_q(Y_{k,r})=0$  and $\partial_q(A^T)=\partial_q(A)^T$ for $q,r>0$.

\remark{}\label{remark1} The linear map $\partial_q$ has the usual properties of the   derivation. Namely, if $f,h\in\widehat{R}$ and $m>0$, then 
$$\partial_q(fh)=\partial_q(f)h+f\partial_q(h)\text{ and } \partial_q(f^m)=m f^{m-1}\partial_q(f).$$

\begin{lemma}\label{lemma_properties}
For any $n\times n$ matrices $A,B$ over $\widehat{R}$ and $q>0$ the following properties hold: 
\begin{enumerate}
\item[a)] $\partial_q(AB)=\partial_q(A)B + A\,\partial_q(B)$;

\item[b)] $\partial_q(\si_t(A))=\sum_{i=0}^{t-1} (-1)^i \tr(A^i \partial_q(A)) \,\si_{t-i-1}(A)$  for all $1\leq t\leq n$. 
\end{enumerate}
\end{lemma}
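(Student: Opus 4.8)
The plan is to prove Lemma~\ref{lemma_properties} by exploiting the derivation-like behaviour of $\partial_q$ recorded in Remark~\ref{remark1}, reducing everything to the scalar Leibniz rule applied entrywise.

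For part~a), the key observation is that $\partial_q$ is an $\FF$-linear map defined on $\widehat R$ that satisfies the ordinary product rule on scalars. First I would write out the $(i,j)^{\rm th}$ entry of $AB$, namely $\sum_k A_{ik}B_{kj}$, apply $\partial_q$ to this sum using linearity, and then apply the scalar Leibniz rule of Remark~\ref{remark1} to each product $A_{ik}B_{kj}$. This yields $\sum_k \bigl(\partial_q(A_{ik})B_{kj}+A_{ik}\partial_q(B_{kj})\bigr)$, which is exactly the $(i,j)^{\rm th}$ entry of $\partial_q(A)B+A\,\partial_q(B)$. Since this holds for every $i,j$, the matrix identity follows. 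This part is routine.

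For part~b), the plan is to differentiate the defining relation for the $\si_t$. The cleanest route is to use the fact, for matrices over a commutative ring, that the derivative of $\det(A+\la E)$ can be expressed via the adjugate, or equivalently to use the Amitsur/Newton-type identity expressing $t\,\si_t(A)$ in terms of the $\tr(A^i)$ and lower $\si$'s. Concretely, I would start from $\det(A+\la E)=\sum_{t} \la^{n-t}\si_t(A)$ and apply $\partial_q$ to both sides, then compare coefficients of $\la^{n-t}$. Differentiating the determinant gives a trace of the adjugate times $\partial_q(A)$, and the classical expansion of the adjugate of $A+\la E$ in powers of $\la$ has coefficients that are themselves the $\si_i(A)$ times powers of $A$; matching this against $\sum_{i=0}^{t-1}(-1)^i \tr(A^i\partial_q(A))\,\si_{t-i-1}(A)$ is the identity we want. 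Alternatively, and perhaps more transparently in arbitrary characteristic, one can prove the formula by induction on $t$ using the Newton-type recursion $t\,\si_t(A)=\sum_{i=1}^{t}(-1)^{i-1}\tr(A^i)\si_{t-i}(A)$, applying $\partial_q$ and the product rule of Remark~\ref{remark1} together with part~a) to handle $\partial_q(A^i)=\sum_{s=0}^{i-1}A^s\partial_q(A)A^{i-1-s}$, and then using cyclicity of the trace.

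The main obstacle is the characteristic-free bookkeeping in part~b). Because $p$ is arbitrary, I cannot divide by $t$, so any argument routed through the Newton recursion $t\,\si_t=\cdots$ risks an illegitimate cancellation; the safe approach is to differentiate the determinant–adjugate identity directly, since that identity is a genuine polynomial identity valid over $\ZZ$ and hence over any $\FF$. The care needed is in correctly identifying the coefficient matrices in the expansion of the adjugate of $A+\la E$ and verifying that the signs $(-1)^i$ and the index shift $\si_{t-i-1}$ come out exactly as stated; once the adjugate expansion is pinned down, the result drops out by comparing coefficients of $\la^{n-t}$ and invoking $\partial_q(A^i)=\sum_{s=0}^{i-1}A^s\partial_q(A)A^{i-1-s}$ followed by the trace cyclicity $\tr(A^s\partial_q(A)A^{i-1-s})=\tr(A^{i-1}\partial_q(A))$.
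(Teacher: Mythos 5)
Your part~a) is exactly the paper's argument: apply the scalar Leibniz rule of Remark~\ref{remark1} to each entry $\sum_k A_{ik}B_{kj}$ of $AB$. For part~b) you take a genuinely different but equally valid route. The paper observes, from the explicit polynomial expression \Ref{eq_4} for $\si_t(A)$ in the entries of $A$, that $\partial_q(\si_t(A))$ is the coefficient of $\la^{t-1}\mu$ in $\si_t(\la A+\mu\,\partial_q(A))$, and then reads that coefficient off from Amitsur's formula for the characteristic coefficients of a sum of matrices. You instead differentiate $\det(A+\la E)$ through the Jacobi/adjugate identity $\partial_q(\det M)=\tr\bigl(\mathrm{adj}(M)\,\partial_q(M)\bigr)$ and expand $\mathrm{adj}(A+\la E)=\sum_{j}\la^{j}B_j$ with $B_{n-t}=\sum_{i=0}^{t-1}(-1)^i\si_{t-1-i}(A)A^i$, obtained by comparing coefficients in $(A+\la E)\,\mathrm{adj}(A+\la E)=\det(A+\la E)E$; matching coefficients of $\la^{n-t}$ then yields the stated formula with the correct signs and index shift. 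Both arguments are polynomial identities over $\ZZ$, hence characteristic-free; yours is more self-contained (no appeal to Amitsur's theorem), while the paper's is shorter given the citation. You are also right to reject the Newton-recursion variant, since it requires dividing by $t$. One minor remark: in the adjugate route the closing step does not actually need $\partial_q(A^i)=\sum_{s}A^s\partial_q(A)A^{i-1-s}$ or trace cyclicity, because $\partial_q(A)$ enters only once, inside $\tr(B_{n-t}\,\partial_q(A))$; that final sentence of your plan is superfluous rather than wrong (it would be needed only for the Newton-recursion alternative).
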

\begin{proof}For short, we write $\partial$ for $\partial_q$. Let $A=(f_{ij})_{1\leq i,j\leq n}$ and $B=(h_{ij})_{1\leq i,j\leq n}$. Remark~\ref{remark1} implies that $(i,j)^{\rm th}$ entry of $\partial(AB)$ is equal to 
$$\sum_{k=1}^n\left( \partial(f_{ik})h_{kj} + f_{ik}\partial(h_{kj})\right).$$
Hence, part~a) is proven. Since
\begin{eq}\label{eq_4} 
\si_t(A)=\sum_{1\leq i_1<\cdots <i_t\leq n}\;\sum_{\tau\in S_t}\sign(\tau)\, f_{i_1,i_{\tau(1)}}\cdots f_{i_t,i_{\tau(t)}},
\end{eq}%
Remark~\ref{remark1} implies that $\partial(\si_t(A))$ is the coefficient of $\lambda^{t-1}\mu$ in the polynomial $\si_t(\la A + \mu \partial(A))$ in $\la,\mu$. Amitsur's formula from~\cite{Amitsur_1980} completes the proof of part~b).  
\end{proof}

\example{}\label{ex1} Applying Lemma~\ref{lemma_properties}, we obtain the next equalities. 
\begin{enumerate}
\item[$\bullet$] Let $f=\si_2(X_1)$. Then $\partial_q(f)=-\tr(Y_{1q}X_1) + \tr(X_1)\tr(Y_{1q})$.

\item[$\bullet$] Let $f=\tr(X_1)^2\tr(X_1 X_2)$ and $p=2$. Then $\partial_1(f)=\tr(X_1)^2 \tr(Y_{11} X_2) + \tr(X_1)^2\tr(X_1 Y_{21})$ and $\partial_2\partial_1(f)=\tr(X_1)^2 \tr(Y_{11} Y_{22}) + \tr(X_1)^2\tr(Y_{12} Y_{21})$. Note that $\partial_1\partial_1(f)=0$. 
\end{enumerate}
\bigskip

Similarly to $\M,\N,\N_{\si}$, we introduce the following notions.
\begin{enumerate}
\item[$\bullet$] For $q\geq0$ we denote by $\widehat{\M}(q)$ the monoid (without unity) freely generated by {\it letters}  $x_k,x_k^T,y_{ks},y_{ks}^T$, where $1\leq k\leq d$ and $1\leq s\leq q$, and set $\widehat{\M}=\cup_{q>0} \widehat{\M}(q)$. Note that $\widehat{\M}(0)=\M$.

\item[$\bullet$] Define the involution ${}^T$ and the equivalences $\sim$ and $\stackrel{c}{\sim}$ on $\widehat{\M}$ in the same way as they were defined on $\M$.

\item[$\bullet$] Let $\widehat{\M}_1=\widehat{\M}\sqcup\{1\}$ and $\widehat{\M}_{\FF}$ be the vector space with the basis $\widehat{\M}$.

\item[$\bullet$] We denote by $\widehat{\N}(q)\subset\widehat{\M}(q)$ and $\widehat{\N}\subset\widehat{\M}$  subsets of primitive elements and define $\widehat{\N}_{\si}(q)$, $\widehat{\N}_{\si}$ similarly to $\N_{\si}$. 
\end{enumerate} 
Define the notion of degree for $\widehat{\M}$ and $\widehat{\N}_{\si}$ in the same way as for $\M$ and $\N_{\si}$. For a letter $b\in\widehat{\M}$ define the $n\times n$ matrix $X_{b}$ as follows:
$$X_{b}=
\left\{
\begin{array}{rl}
Y_{ks},&\text{if } b=y_{ks}\\
Y_{ks}^T,&\text{if } b=y_{ks}^T \text{ and }G=O(n)\\
Y_{ks}^{\ast},&\text{if } b=y_{ks}^T \text{ and }G=\Sp(n)\\
X_b,&\text{if } b\in\M\\
\end{array}
\right..
$$
Given letters $a_1,\ldots, a_r\in\widehat{\M}$, we set $X_{a_1\cdots a_r}=X_{a_1}\cdots X_{a_r}$. A homomorphism  $\widehat{\Psi}_n:\N_{\si} \to R^G$ is defined by $\si_t(a) \to \si_t(X_a)$, if $t\leq n$, and $\si_t(a) \to 0$ otherwise.  

Assume that $q>0$. We define a linear map $\partial_q:\widehat{\M}_{\FF}\to \widehat{\M}_{\FF}$ as follows:  
\begin{enumerate}
\item[$\bullet$] $\partial_q(x_k)=y_{kq}$, $\partial_q(x_k^T)=y_{kq}^T$, and $\partial_q(y_{ks})=\partial_q(y_{ks}^T)=0$ for all $1\leq k\leq d$ and $s>0$;

\item[$\bullet$] $\partial_q(a_1\cdots a_r)=\sum_{i=1}^r a_1\cdots a_{i-1}\, \partial_q(a_i)\, a_{i+1} \cdots a_r$ for letters $a_1,\ldots, a_r\in\widehat{\M}$.
\end{enumerate}

Define a linear map $\partial_q:\widehat{\N}_{\si}(q-1)\to \widehat{\N}_{\si}(q)$ as follows: for $a,a_1,\ldots, a_r\in\widehat{\M}(q-1)$ and $t,t_1,\ldots,t_r>0$, we set 
\begin{enumerate}
\item[$\bullet$] $\partial_q(\al)=0$ for $\al\in\FF$;

\item[$\bullet$] $\partial_q(\si_t(a))=\sum_{i=0}^{t-1} (-1)^i \tr(a^i \partial_q(a)) \,\si_{t-i-1}(a)$, where we use the convention that 
$$\tr(a_1+\cdots+a_r)=\tr(a_1)+\cdots+\tr(a_r);$$

\item[$\bullet$] $\partial_q(\si_{t_1}(a_1)\cdots \si_{t_r}(a_r))=$ 
$$\sum_{i=1}^r \si_{t_1}(a_1)\cdots \si_{t_{i-1}}(a_{i-1})\, \partial_q(\si_{t_i}(a_i))\, \si_{t_{i+1}}(a_{i+1}) \cdots \si_{t_r}(a_r).$$
\end{enumerate}
Since an element $(a_1\cdots a_r)^i a_1\cdots a_{j-1}\, \partial_q(a_j)\, a_{j+1} \cdots a_r$ is either primitive or zero, where $a_1,\ldots,a_r\in\widehat{\M}(q-1)$ are letters, the following remark implies that the map $\partial_q:\widehat{\N}_{\si}(q-1)\to \widehat{\N}_{\si}(q)$ is well defined. 

\begin{remark}\label{remark_M} For $a,b\in \widehat{\M}$ and $q>0$ we have
\begin{enumerate}
\item[$\bullet$] $\partial_q(a^T)=\partial_q(a)^T$;

\item[$\bullet$] if $a\sim b$, then $\tr(a^m\partial_q(a))=\tr(b^m\partial_q(b))$ for all $m>0$.
\end{enumerate}
\end{remark}
\bigskip

Note that by abuse of notation we denote three different linear maps by one and the same symbol $\partial_q$. 

\begin{lemma}\label{lemma_diagram}
For $q>0$ the following diagram is commutative:
$$\begin{array}{ccc}
\widehat{\N}_{\si}(q-1)&\stackrel{\widehat{\Psi}_n}{\longrightarrow}& \widehat{R}^G\\
\partial_q\downarrow\quad\;\; & &\quad\;\;\downarrow\partial_q\\
\widehat{\N}_{\si}(q)&\stackrel{\widehat{\Psi}_n}{\longrightarrow}& \widehat{R}^G\\
\end{array}$$
In particular, if $f\in\widehat{\N}_{\si}(q-1)$ is a free relation (i.e., $\widehat{\Psi}_n(f)=0$ for all $n>0$), then $\partial_q(f)\in\widehat{\N}_{\si}$ is also a free relation.
\end{lemma}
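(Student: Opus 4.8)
The plan is to prove the commutativity of the diagram, from which the ``in particular'' statement follows immediately: if $\widehat{\Psi}_n(f)=0$ for all $n$, then commutativity gives $\widehat{\Psi}_n(\partial_q(f))=\partial_q(\widehat{\Psi}_n(f))=\partial_q(0)=0$ for all $n$, so $\partial_q(f)$ is again a free relation. Thus the entire content is the equality $\widehat{\Psi}_n\circ\partial_q=\partial_q\circ\widehat{\Psi}_n$ as maps $\widehat{\N}_{\si}(q-1)\to\widehat{R}^G$, where on the left $\partial_q$ is the symbolic derivation on $\widehat{\N}_{\si}$ and on the right it is the matrix/ring derivation on $\widehat{R}$.

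First I would reduce to checking the identity on the algebra generators $\si_t(a)$ with $a\in\widehat{\M}(q-1)$ and $t\leq n$ (the case $t>n$ being trivial since both paths send $\si_t(a)$ to $0$, using that $\partial_q$ preserves degree in the generic-matrix variables and $\widehat\Psi_n$ kills $\si_t$ for $t>n$). Because both $\partial_q$'s were \emph{defined} to satisfy the Leibniz rule over products of generators (see Remark~\ref{remark1} on the ring side and the third bullet in the definition of $\partial_q$ on $\widehat{\N}_{\si}$ on the symbolic side), and because $\widehat\Psi_n$ is an algebra homomorphism, once the identity holds on each single generator $\si_t(a)$ it propagates to all monomials and hence to all of $\widehat{\N}_{\si}(q-1)$ by linearity. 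So the crux is a single-generator computation.

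For that computation I would apply $\widehat\Psi_n$ to the symbolic formula for $\partial_q(\si_t(a))$ and compare it with $\partial_q(\si_t(X_a))$ computed via Lemma~\ref{lemma_properties}(b). The symbolic side yields $\sum_{i=0}^{t-1}(-1)^i\,\widehat\Psi_n(\tr(a^i\partial_q(a)))\,\widehat\Psi_n(\si_{t-i-1}(a))$, while Lemma~\ref{lemma_properties}(b) gives $\partial_q(\si_t(X_a))=\sum_{i=0}^{t-1}(-1)^i\tr(X_a^{\,i}\,\partial_q(X_a))\,\si_{t-i-1}(X_a)$. Writing $a=a_1\cdots a_r$ in letters, the key matching step is that the ring derivation of the matrix $X_a=X_{a_1}\cdots X_{a_r}$ obeys the product rule of Lemma~\ref{lemma_properties}(a), so $\partial_q(X_a)=\sum_{j} X_{a_1}\cdots\partial_q(X_{a_j})\cdots X_{a_r}$, and this is precisely the image under $\widehat\Psi_n$ of the symbolic $\partial_q(a)=\sum_j a_1\cdots\partial_q(a_j)\cdots a_r$. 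One then checks term by term that $\widehat\Psi_n(\si_{t-i-1}(a))=\si_{t-i-1}(X_a)$ for $t-i-1\leq n$ and that $\widehat\Psi_n(\tr(a^i\partial_q(a)))=\tr(X_a^{\,i}\partial_q(X_a))$, the latter because $\widehat\Psi_n$ sends each generic-matrix letter to its assigned matrix and respects the trace and the transposition/symplectic-transposition via the compatibility $\si_t(A^\de)=\si_t(A)$, $(AB)^\de=B^\de A^\de$ already recorded in the definition of $\Psi_n$.

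The main obstacle I anticipate is purely bookkeeping rather than conceptual: one must verify that the symbolic $\partial_q$ on $\widehat{\N}_{\si}$ is genuinely well defined before the diagram makes sense, i.e.\ that the right-hand side of the formula for $\partial_q(\si_t(a))$ does not depend on the choice of representative in the $\sim$-equivalence class of $a$ and that each $a^i\,a_1\cdots\partial_q(a_j)\cdots a_r$ is primitive or zero. Both of these are supplied by Remark~\ref{remark_M} (the relation $\partial_q(a^T)=\partial_q(a)^T$ and the invariance $\tr(a^m\partial_q(a))=\tr(b^m\partial_q(b))$ under $a\sim b$) together with the primitivity remark stated just before Remark~\ref{remark_M}, so I would invoke these explicitly. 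The remaining subtlety is the $t>n$ bookkeeping: I must confirm that $\widehat\Psi_n$ annihilates $\si_{t-i-1}(a)$ exactly when $t-i-1>n$ and that on the ring side Lemma~\ref{lemma_properties}(b) is stated only for $t\leq n$, so the summation ranges agree and no spurious terms survive on either path.
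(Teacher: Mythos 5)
Your proposal is correct and takes essentially the same approach as the paper, whose two-line proof likewise verifies the identity on the generators (via Lemma~\ref{lemma_properties} and the definition of the symbolic $\partial_q$, which was built precisely to mirror Lemma~\ref{lemma_properties}(b)) and propagates it through products by the Leibniz rule and linearity. Your only soft spot is the $t>n$ case, where the terms of $\widehat{\Psi}_n(\partial_q(\si_t(a)))$ with $t-i-1\le n$ do survive individually and their sum vanishes only because Amitsur's formula still computes the identically zero coefficient of $\la^{t-1}\mu$ in $\si_t(\la X_a+\mu\,\partial_q(X_a))$ --- but the paper's own proof glosses over this point as well.
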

\begin{proof}
Obviously, the statement of the lemma holds for $\tr(X_a)$, where $a\in\widehat{\M}$ is a letter. Lemma~\ref{lemma_properties} and the definition of $\partial_q:\widehat{\N}_{\si}(q-1)\to \widehat{\N}_{\si}(q)$ complete the proof.
\end{proof}

In the proof of the next lemma we use statements from Section~\ref{section_auxiliary}, which obviously hold for elements from $\widehat{\M}$. 

\begin{lemma}\label{lemma_linind1}
Assume that $q,s>0$ and $a\in\widehat{\N}(q-1)$ satisfies 
\begin{enumerate}
\item[a)] $\deg_{x_k}(a)\neq0$ for some $1\leq k\leq d$; 

\item[b)] $a\stackrel{c}{\not\sim} a^T$. 
\end{enumerate}
Let monomials 
$$f_i=\prod_{j=1}^{r_i} \si_{t_{ij}}^{m_{ij}}(a)\in\widehat{\N}_{\si},$$ 
be pairwise different, where $r_i>0$, $t_{i1}>\cdots>t_{ir_i}\geq1$ and $m_{i1},\ldots,m_{ir_i}>0$ are not divided by $p$ ($1\leq i\leq s$). Then 
$\partial_q(f_1),\ldots,\partial_q(f_s)$ are linear independent over $\FF$. 
\end{lemma}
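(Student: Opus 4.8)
The plan is to reduce the linear independence of the $\partial_q(f_i)$ to a combinatorial statement about leading terms. Each $f_i$ is a product of powers $\si_{t_{ij}}^{m_{ij}}(a)$, and applying $\partial_q$ via the product rule together with Lemma~\ref{lemma_properties}(b) produces a sum of terms, each of which replaces exactly one occurrence of the letter $x_k$ (for some $k$) inside one factor by the corresponding $y_{kq}$. Concretely, expanding $\partial_q(\si_t(a))=\sum_{i=0}^{t-1}(-1)^i\tr(a^i\partial_q(a))\,\si_{t-i-1}(a)$ and then $\partial_q(a)=\sum_l a_1\cdots \partial_q(a_l)\cdots a_r$, every resulting monomial in $\widehat{\N}_{\si}$ is a product of some $\si$'s in $a$ times a single trace factor $\tr(a^i b)$, where $b$ is obtained from a cyclic subword of $a$ by substituting one $x_k$ with $y_{kq}$. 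The strategy is to exhibit, for each $f_i$, a distinguished monomial $M_i$ appearing in $\partial_q(f_i)$ that occurs in no $\partial_q(f_{i'})$ for $i'\neq i$ and with coefficient not divisible by $p$; this immediately forces linear independence.

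First I would fix the ``top'' factor: let $\si_{t_{i1}}^{m_{i1}}(a)$ be the factor with the largest index $t_{i1}$, and within the product rule single out the summand where $\partial_q$ hits this factor while all other factors survive intact. Within $\partial_q(\si_{t_{i1}}(a))$ I would take the $i=t_{i1}-1$ term, namely $(-1)^{t_{i1}-1}\tr(a^{t_{i1}-1}\partial_q(a))\,\si_0(a)$, since $\si_0(a)=1$ contributes no surviving $\si$-factor and the trace $\tr(a^{t_{i1}-1}\partial_q(a))$ carries the full degree information. The monomial $M_i$ is then
$$M_i=\si_{t_{i1}}^{m_{i1}-1}(a)\Big(\prod_{j=2}^{r_i}\si_{t_{ij}}^{m_{ij}}(a)\Big)\,\tr(a^{t_{i1}-1}\partial_q(a)),$$
and its coefficient is $\pm m_{i1}$, which is nonzero in $\FF$ by hypothesis. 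The point of choosing the largest $t$-index and the extreme term of Amitsur's sum is that the multiset of $\si$-exponents of $M_i$ uniquely encodes the tuple $(t_{i1},\dots,t_{ir_i};m_{i1},\dots,m_{ir_i})$: the trace factor records which $t$ was differentiated (through the power $a^{t_{i1}-1}$), and the remaining commutative $\si$-monomial records the rest of $f_i$.

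The crux is to verify that $M_i$ cannot arise from any other $f_{i'}$, and this is exactly where hypotheses (a) and (b) are needed. Hypothesis (a), that $\deg_{x_k}(a)\neq0$ for some $k$, guarantees $\partial_q(a)\neq0$ so that $M_i$ is genuinely present and nonzero. Hypothesis (b), that $a\stackrel{c}{\not\sim}a^T$, is the delicate one: it ensures that $a$ and the substituted word $b$ appearing inside the trace have no spurious $\sim$-coincidences. Here I would invoke Lemma~\ref{lemma_subword}(a) and~(b): since $a$ is primitive and not equivalent to $a^T$, it is an $l$-subword of itself only for $l=1$ and never an $l^T$-subword, so the cyclic position at which $x_k$ was replaced by $y_{kq}$ is recorded unambiguously and the trace factor $\tr(a^{t_{i1}-1}\partial_q(a))$ decomposes into distinct primitive pieces that do not collapse under $\sim$. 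Consequently the assignment $f_i\mapsto M_i$ is injective with $p$-invertible coefficients.

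The main obstacle I anticipate is the bookkeeping in the last step: one must rule out that a \emph{non-extreme} term of $\partial_q(f_{i'})$—arising from a smaller-index factor $\si_{t_{i'j}}(a)$ with $j>1$, combined with a leftover $\si_{t_{i'1}}(a)$ of positive power—accidentally reproduces $M_i$. This requires comparing the full $\si$-exponent profile together with the internal structure $a^{t-1}\partial_q(a)$ of the trace factor, and checking that the largest surviving index in $M_i$ together with the explicit power inside the trace pins down both the differentiated index $t_{i1}$ and the whole tuple. The primitivity of $a$ and the inequivalence $a\stackrel{c}{\not\sim}a^T$ are precisely what prevent two different $(t,a^{t-1}\partial_q(a))$ data from producing the same element of $\widehat{\N}_{\si}$, so once the uniqueness of this leading monomial is established the linear independence follows at once.
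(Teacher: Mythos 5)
Your overall strategy coincides with the paper's: expand $\partial_q(f_i)$ into monomials of the form $\frac{f_i}{\si_{t_{iu}}(a)}\,\si_{t_{iu}-v-1}(a)\,\tr(a^v b_k)$ (writing $\partial_q(a)=b_1+\cdots+b_r$), use hypotheses a), b) and Lemma~\ref{lemma_subword} to show that a trace factor $\tr(a^vb_k)$ determines $(v,k)$, and then isolate a distinguished monomial whose coefficient $\pm m_{i1}$ is nonzero since $p\nmid m_{i1}$. That much is sound. But there is a genuine gap exactly at the point you yourself flag as ``the main obstacle'': your distinguished monomial $M_i$ is built from the largest index $t_{i1}$ \emph{of $f_i$ itself}, and your claim that $M_i$ occurs in no $\partial_q(f_{i'})$ for $i'\neq i$ is false. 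A non-extreme Amitsur term coming from a factor $\si_{t_{i'u}}(a)$ with $t_{i'u}>t_{i1}$ produces the same trace factor $\tr(a^{t_{i1}-1}b_1)$ (take $v=t_{i1}-1$) while leaving behind $\si_{t_{i'u}-t_{i1}}(a)$, and this can reproduce $M_i$ exactly. Concretely, take $f_1=\si_2(a)\si_1(a)$ and $f_2=\si_3(a)$: the monomial $\tr(a)\,\tr(ab_1)$ occurs in $\partial_q(f_1)$ with coefficient $-1$ (from $u=1$, $v=1$, leftover $\si_0(a)=1$) and also in $\partial_q(f_2)$ with coefficient $-1$ (from $v=1$, leftover $\si_1(a)$), so inspecting this monomial cannot show $\al_1=0$. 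Your assertion that ``the largest surviving index in $M_i$ together with the explicit power inside the trace pins down \dots the whole tuple'' is precisely what fails here.

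The repair is what the paper does: choose $t=\max_{i,j}t_{ij}$ \emph{globally} (after reordering, $t=t_{11}$) and the extreme term $v=t-1$. Then any coincidence $f_{i,(u,v,k)}=f_{1,(1,t-1,1)}$ forces $v=t-1$ and $t-1<t_{iu}\leq t$, hence $t_{iu}=t$, the leftover factor is $\si_0(a)=1$, and consequently $k=1$, $u=1$ and $f_i=f_1$; this gives $\al_1(-1)^{t-1}m_{11}=0$, a contradiction. (Alternatively your per-$i$ monomials could be salvaged by ordering the $f_i$ by decreasing $t_{i1}$ and noting that a collision forces $t_{i'1}>t_{i1}$ strictly, so the coefficient matrix is triangular with diagonal $\pm m_{i1}$; but that argument must actually be made, not asserted.) The remaining ingredient of your outline --- that $a^{t-1}b_1\sim a^vb_k$ forces $v=t-1$, $k=1$ via parts a) and b) of Lemma~\ref{lemma_subword} after substituting $y_{k',q}\to x_{k'}$ --- matches the paper and is where hypothesis b) enters.
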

\begin{proof}  Let $\al_1\partial_q(f_1)+\cdots+\al_s\partial_q(f_s)=0$, where $\al_i\in\FF$, be a non-trivial linear combination. Then without loss of generality we can assume that $\al_i\neq0$ for all $i$. Moreover, without loss of generality we can assume that 
$$t_{11}=\max_{i,j}\{t_{ij}\}.$$
For short, we denote $t=t_{11}$. We have 
$$\partial_q(a)= b_1+\cdots+b_r$$
for pairwise different $b_1,\ldots,b_r\in\widehat{\N}$ and $r>0$. By the definition of $\partial_q$,  
$$
\partial_q(f_i)=\sum_{w\in\Omega_i} \be_{i,w} f_{i,w},
$$
where $\Omega_i$ is equal to the set of pairs 
$$\{(u,v,k)\,|\,1 \leq u\leq r_i,\,0\leq v< t_{iu},\,1\leq k\leq r\},$$ 
$\be_{i,(u,v,k)}=(-1)^v m_{iu}$ is non-zero, and
$$f_{i,(u,v,k)}=\si_{t_{iu}}(a)^{m_{iu}-1} \tr(a^v b_k) \,\si_{t_{iu}-v-1}(a) \prod_{1\leq j\leq r_i,\,j\neq u} \si_{t_{ij}}^{m_{ij}}(a).$$
We claim that for $i_0=1$ and $w_0=(1,t-1,1)$ the following statement holds:

\medskip%
\noindent{}{\it If $f_{i,w}=f_{i_0,w_0}$, then $i=i_0$ and $w=w_0$.}
\medskip

Assume that $i$ and $w=(u,v,k)$ satisfy $f_{i,w}=f_{i_0,w_0}$. There exists a unique  $1\leq k'\leq d$ such that $\deg_{z}(b_1)=1$ for $z=y_{k',q}$. The only multiplier of $f_{i_0,w_0}$  that contains $z$ or $z^T$ is $\tr(a^{t-1} b_1)$ and the only multiplier of $f_{i,w}$ that can contain $z$ or $z^T$ is $\tr(a^v b_k)$. Therefore, $a^{t-1} b_1\sim a^v b_k$. The last equivalence implies that $v=t-1$ and one of the following cases holds:
\begin{enumerate}
\item[1.] $a^{t-1} b_1\stackrel{c}{\sim} a^{t-1} b_k$; 

\item[2.] $a^{t-1} b_1\stackrel{c}{\sim} (a^{t-1} b_k)^T$. 
\end{enumerate}
Note that the result of the substitutions $z\to x_{k'}$, $z^T\to x_{k'}^T$ in $b_{1}$ as well as in $b_{k}$ is $a$. Thus, making these substitutions in the above equivalences, we obtain that $a$ is the $l$-subword of $a$ in case~1 and $a$ is the $l^T$-subword of $a$ in case~2, where $1\leq l\leq \deg{a}$. Part~b) of Lemma~\ref{lemma_subword} implies a contradiction in case~2. Part~a) of Lemma~\ref{lemma_subword} implies that $l=1$ in case~1. Thus, $a^{t-1} b_1= a^{t-1} b_k$ and $k=1$. Since $v< t_{iu}\leq t$, we obtain $t_{iu}=t$. The inequalities $t_{i,1}>\cdots>t_{i,r_i}$ imply $u=1$. Therefore, $w=w_0$. 

It is not difficult to see that in the quotient field of $\widehat{\N}_{\si}$ we have
$$f_{i,w}=\frac{f_i}{\si_t(a)}\tr(a^{t-1} b_1)\text{ and }
f_{i_0,w_0}=\frac{f_1}{\si_t(a)}\tr(a^{t-1} b_1).$$
Thus $i=1$ and the claim is proven. Obviously, the claim implies a contradiction to the fact that $\al_{i_0}\neq0$. The lemma is proven. 
\end{proof}
\bigskip

\begin{lemma}\label{lemma_linind2}
Assume that in the formulation of Lemma~\ref{lemma_linind1} we have $a\stackrel{c}{\sim} a^T$ instead of condition~b). Then 
\begin{enumerate}
\item[$\bullet$] if $p\neq 2$, then $\partial_q(f_1) ,\ldots, \partial_q(f_s)$ are linear independent over $\FF$;

\item[$\bullet$] if $p=2$, then $\partial_q(f_i)=0$ for all $i$.
\end{enumerate}
\end{lemma}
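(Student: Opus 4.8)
The plan is to reduce both cases to the situation $a=a^T$ and then track how the transpose symmetry of $a$ governs the monomials $\tr(a^i\,\partial_q(a))$ that drive the argument of Lemma~\ref{lemma_linind1}. First I would reduce to $a=a^T$: by part~c) of Lemma~\ref{lemma_subword} there is $b\in\widehat{\N}$ with $a\stackrel{c}{\sim}b$ and $b=b^T$, and since $\si_t(a)=\si_t(b)$ in $\widehat{\N}_{\si}$ while $\tr(a^m\partial_q(a))=\tr(b^m\partial_q(b))$ by Remark~\ref{remark_M}, I may replace $a$ by $b$ and assume $a=a^T$. Part~b) of Lemma~\ref{lemma_subword00} then gives $a=cc^T$, so $m:=\deg a$ is even. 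Writing $a=y_1\cdots y_m$ with letters $y_j$, the equality $a=a^T$ forces $y_j^T=y_{m+1-j}$, and applying $\partial_q$ termwise gives $\partial_q(a)=\sum_{j=1}^m c_j$ with $c_j=y_1\cdots y_{j-1}\,\partial_q(y_j)\,y_{j+1}\cdots y_m$.

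Using $\partial_q(a^T)=\partial_q(a)^T$ together with $\partial_q(y_{m+1-j})=\partial_q(y_j)^T$ one checks $c_j^T=c_{m+1-j}$; since $m$ is even and no letter satisfies $y=y^T$, the involution $j\mapsto m+1-j$ is fixed-point free on the indices $j$ with $c_j\neq0$. Moreover $c\sim c^T$ always, so $\tr(c)=\tr(c^T)$, and hence $\tr(a^i c_{m+1-j})=\tr(a^i c_j^T)=\tr(a^i c_j)$ for every $i$ (using $a^T=a$ and cyclicity of $\tr$). This pairing identity is the common engine of both cases.

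For $p=2$ it finishes the argument at once: pairing $j$ with $m+1-j$ yields $\tr(a^i\,\partial_q(a))=\sum_j\tr(a^i c_j)=2\sum_{j\le m/2}\tr(a^i c_j)=0$, so $\partial_q(\si_t(a))=\sum_{i=0}^{t-1}(-1)^i\tr(a^i\partial_q(a))\,\si_{t-i-1}(a)=0$, and by the product rule defining $\partial_q$ on $\widehat{\N}_{\si}$ every $\partial_q(f_i)$ vanishes. For $p\neq2$ I would rerun the proof of Lemma~\ref{lemma_linind1} almost verbatim, the only new feature being that its case~2 is no longer excluded. Collecting $\partial_q(a)=b_1+\cdots+b_r$ into pairwise different primitives, the multiset $\{b_k\}$ is stable under $^T$, say $b_k^T=b_{\tau(k)}$ for an involution $\tau$. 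With $z=y_{k',q}$ satisfying $\deg_z(b_1)=1$, $t=\max_{i,j}t_{ij}$, and the distinguished monomial $f_{i_0,w_0}=\frac{f_1}{\si_t(a)}\tr(a^{t-1}b_1)$, the equivalence $a^{t-1}b_1\sim a^{v}b_k$ again forces $v=t-1$ and, after substituting $z\to x_{k'}$, yields either that $a$ is an $l$-subword of $a$ (case~1: $l=1$, so $b_k=b_1$) or that $a$ is an $l^T$-subword of $a$ (case~2, now permitted; by part~d) of Lemma~\ref{lemma_subword}, $l=m$, so $b_k=b_1^T=b_{\tau(1)}$). In both cases $i=1$ exactly as before, and since $\tr(a^{t-1}b_{\tau(1)})=\tr(a^{t-1}b_1)$ the two sources produce the \emph{same} monomial $f_{i_0,w_0}$. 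Its total coefficient in $\sum_i\al_i\,\partial_q(f_i)$ is therefore $\al_1(-1)^{t-1}m_{11}$ when $\tau(1)=1$ and $\al_1\,2(-1)^{t-1}m_{11}$ otherwise; as $p\neq2$ and $p\nmid m_{11}$, this is nonzero, contradicting $\sum_i\al_i\,\partial_q(f_i)=0$ and giving linear independence.

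The main obstacle I expect is the bookkeeping in case~2: one must verify that the newly admitted collisions contribute \emph{only} through the transpose partner $b_{\tau(1)}$ (via the sharp $l=m$ conclusion of Lemma~\ref{lemma_subword}d, available precisely because $a=a^T$) and that $\tr(a^{t-1}b_1^T)=\tr(a^{t-1}b_1)$, so these collisions merge into the single monomial $f_{i_0,w_0}$ instead of spawning new ones. It is exactly this merging that produces the factor $2$ and thereby separates the behaviour at $p=2$ from that at $p\neq2$.
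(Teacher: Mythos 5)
Your proof is correct and follows essentially the same route as the paper: reduce to a $\stackrel{c}{\sim}$-representative with $a=a^T$ (equivalently $a=cc^T$), use the transpose pairing of the terms of $\partial_q(a)$ to extract the factor $2$ that kills everything when $p=2$, and rerun the collision analysis of Lemma~\ref{lemma_linind1} with the formerly excluded case~2 now merging into the same distinguished monomial. The only difference is bookkeeping: the paper folds the pairing into the coefficient $\be_{i,(u,v,k)}=2(-1)^v m_{iu}$ from the start via $\tr(a^v b_k c^T)=\tr(a^v c b_k^T)$ and then rules out its case~2 by a degree-in-$z$ count, whereas you keep the letter-level decomposition and let case~2 contribute the transpose partner $b_{\tau(1)}$ (which, as you note, can never equal $b_1$ since $\deg a$ is even), arriving at the same nonzero coefficient $2\al_1(-1)^{t-1}m_{11}$.
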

\begin{proof} We use notations from the formulation of Lemma~\ref{lemma_linind1}.  Without loss of generality we can assume that 
$$t_{11}=\max_{i,j}\{t_{ij}\}.$$
For short, we denote $t=t_{11}$. By part~b) of Lemma~\ref{lemma_subword00} and part~c) of Lemma~\ref{lemma_subword}, without loss of generality we can assume that $a=cc^T$ for some $c\in\widehat{\M}$.  We have $\partial_q(c)= b_1+\cdots+b_r$,
where $b_1,\ldots,b_r\in\widehat{\M}$ are pairwise different and $r>0$. By Remark~\ref{remark_M},  
$$\partial_q(a)=b_1 c^T + \cdots + b_r c^T + c b_1^T +\cdots + c b_r^T.$$
Since $\tr(a^v b_k c^T)=\tr(a^v c b_k^T)$ for all $1\leq k\leq r$ and $v>0$, we obtain that 
$$
\partial_q(f_i)=\sum_{w\in\Omega_i} \be_{i,w} f_{i,w},
$$%
where $\Omega_i$ is equal to the set of pairs 
$$\{(u,v,k)\,|\,1 \leq u\leq r_i,\,0\leq v< t_{iu},\,1\leq k\leq r\},$$ 
$\be_{i,(u,v,k)}=2(-1)^v m_{iu}$, and
$$f_{i,(u,v,k)}=\si_{t_{iu}}(a)^{m_{iu}-1} \tr(a^v b_k c^T) \,\si_{t_{iu}-v-1}(a) \prod_{1\leq j\leq r_i,\,j\neq u} \si_{t_{ij}}^{m_{ij}}(a).$$
Therefore, if $p=2$, then $\partial_q(f_i)=0$ for all $i$ and the required is proven. 

Let $p\neq 2$. We claim that if $f_{i,w}=f_{i_0,w_0}$, then $i=i_0$ and $w=w_0$, where  $i_0=1$ and $w_0=(1,t-1,1)$. 

Let $i$ and $w=(u,v,k)$ satisfy $f_{i,w}=f_{i_0,w_0}$. 
There exists a unique  $1\leq k'\leq d$ such that $\deg_{z}(b_1)=1$ for $z=y_{k',q}$.
The only multiplier of $f_{i_0,w_0}$  that contains $z$ or $z^T$ is $\tr(a^{t-1} b_1 c^T)$ and the only multiplier of $f_{i,w}$ that can contain $z$ or $z^T$ is $\tr(a^v b_k c^T)$. Therefore, $a^{t-1} b_1 c^T\sim a^v b_k c^T$. The last equivalence implies that $v=t-1$ and one of the following cases holds:
\begin{enumerate}
\item[1.] $a^{t-1} b_1 c^T\stackrel{c}{\sim} a^{t-1} b_k c^T$; 

\item[2.] $a^{t-1} b_1 c^T\stackrel{c}{\sim} a^{t-1} c b_k^T$. 
\end{enumerate}
Note that the result of the substitutions $z\to x_{k'}$, $z^T\to x_{k'}^T$ in $b_{1}$ as well as in $b_{k}$ is $c$. Making these substitutions, we obtain that $a$ is the $l$-subword of $a$ in both cases, where $1\leq l\leq \deg{a}$. Part~a) of Lemma~\ref{lemma_subword} implies that $l=1$. Since $\deg_z(a)=0$, we obtain a contradiction in case~2 and the equality $a^{t-1} b_1= a^{t-1} b_k$ in case~1.

%

So, we proved that $k=1$. The rest of the proof of the claim is the same as in the proof of Lemma~\ref{lemma_linind1} and the required follows from the claim. 
\end{proof}
\bigskip

\section{$p$-multilinear free relations}\label{section_O_reduction}

We assume that $G$ is $O(n)$ or $\Sp(n)$. Let $f\in\widehat{\N}_{\si}$ be a monomial. If $p>0$, then we write  $f=f^{+}f^{-}$ for 
\begin{eq}\label{eq_2}
f^{+}=\si_{t_1}^p(a_1)\cdots \si_{t_r}^p(a_r)\text{ and } 
f^{-}=\si_{l_1}^{q_1}(b_1)\cdots \si_{l_s}^{q_s}(b_s),
\end{eq}%
where $a_1,\ldots,a_r,b_1,\ldots,b_s\in\widehat{\N}$, $1\leq q_1,\ldots,q_s<p$, and $\si_{l_1}(b_1),\ldots, \si_{l_s}(b_s)$ are pairwise different elements of $\widehat{\N}_{\si}$. If $p=0$, then we set $f^{+}=1$ and $f^{-}=f$. 

As example, if $f=\tr^5(x_1)$ and $p=2$, then $f^{+}=\tr^4(x_1)$ and $f^{-}=\tr(x_1)$.

\defin{}\label{def_p_multilin} Let $f=\sum_{w\in\Omega} \al_w f_w\in \widehat{\N}_{\si}$, where $\al_w\in\FF$ is non-zero and $f_w$ is a monomial. Then $f$ is called {\it multilinear} if  $\deg_{z}(f_w)\leq 1$ for every letter $z\in\widehat{\M}$ and $w\in\Omega$.

The element $f$ is called {\it $p$-multilinear} if there is a subset $I\subset\{x_k,y_{k,q}\,|\,1\leq k\leq d,\; q>0\}$ such that every $w\in\Omega$ satisfies the following conditions:    
\begin{enumerate}
\item[$\bullet$] $\deg_{z}(f_{w}^{+})=0$ and $\deg_{z}(f_{w}^{-})\leq1$ for every letter $z\not\in I$;

\item[$\bullet$] $\deg_{z}(f_{w}^{-})=0$ for every $z\in I$.
\end{enumerate}
\bigskip

In this section we prove that if there is a non-zero free relation, then there exists a non-zero $p$-multilinear free relation (see Corollary~\ref{cor_reduction} below). 

For $f$ as in Definition~\ref{def_p_multilin} we set 
$$\degp{f}=\max_{w\in\Omega}\left\{\deg(f_w^{+})\right\}
\;\text{ and }\;
\degm{f}=\max_{w\in\Omega}\left\{\sum_{k=1}^d \deg_{x_{k}}(f_w^{-})\right\}.$$
Note that for $q>0$ and a monomial $f\in\widehat{\N}_{\si}$ we have 
\begin{eq}\label{eq_3}
\partial_q(f)=f^{+}\partial_q(f^{-}).
\end{eq}%

The next remark follows from the definition of $\partial_q$ and the fact that if $b^r=c^s$ for $b,c\in\M$ and $r,s>0$, then there is an $e\in\M$ such that $b=e^i$ and $c=e^j$ for some $i,j>0$ (see part~a) of Lemma~\ref{lemma_subword00}).

\begin{remark}\label{remark_last}
Let $q,t,l>0$, $a,b\in\widehat{\N}(q-1)$, $\partial_q(\si_t(a))=\sum_i\pm f_i$ and $\partial_q(\si_l(b))=\sum_j\pm h_j$ for monomials $f_i,h_j$. Then there exist $i,j$ with $f_i=h_j$ if and only if $a\sim b$ and $t=l$.
\end{remark}

\begin{lemma}\label{lemma_p_multilin}
Let $p\neq 2$ and $f\in\N_{\si}$. Then there is a $q\geq0$ such that
$\partial_q\cdots\partial_1(f)$ is a non-zero $p$-multilinear element of $\widehat{\N}_{\si}$.
\end{lemma}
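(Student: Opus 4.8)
The plan is to prove Lemma~\ref{lemma_p_multilin} by induction on the integer $m=\degm{f}$, using that each nonzero application of a derivation strictly lowers $\degm{\,\cdot\,}$ and that the derivations create only fresh letters. For the base case $m=0$: by \Ref{eq_3} the derivations never touch the $p$-th power part, and each step $\partial_s$ inserts exactly one letter with second index $s$ per monomial while distinct steps use distinct second indices; hence in $\partial_q\cdots\partial_1(f)$ every letter $y_{k,s}$ occurs at most once and never enters a $p$-th power. When $\degm{f}=0$ no monomial $f_w^{-}$ contains any $x_k$, so each $f_w^{-}$ is multilinear in the variables $y_{k,s}$ and its support is disjoint from that of $f_w^{+}$, which involves only the $x$'s. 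Choosing $I$ to be the set of all $x_k$ occurring in some $f_w^{+}$ exhibits $f$ as $p$-multilinear in the sense of Definition~\ref{def_p_multilin}, so one may take $q=0$.

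For the inductive step I first observe that $\degm{\partial_q(f)}\le\degm{f}-1$ whenever $\partial_q(f)\neq0$. Indeed, by \Ref{eq_3} the operator acts only on $f^{-}$, and by Lemma~\ref{lemma_properties} differentiating a factor $\si_l(b)$ converts a single occurrence of some $x_k$ in $b$ into $y_{k,q}$; hence the total $x$-degree of each resulting monomial drops by exactly one, and re-splitting the product $f^{+}\partial_q(f^{-})$ into its new $\pm$ parts can only move generators into the plus part, so the minus-part $x$-degree of each monomial is at most $\degm{f}-1$. Thus, granting non-vanishing, after at most $m$ steps we reach an element with $\degm{\,\cdot\,}=0$, which by the base case is $p$-multilinear. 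It therefore remains to prove the crucial claim: if $p\neq2$ and $\degm{f}\ge1$, then $\partial_q(f)\neq0$.

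To prove this I would reduce and then run a leading-monomial argument. First, since $f$ involves no letter with second index $q$, the map $\partial_q$ does not mix distinct components of $f$ that are homogeneous in every letter, so I may assume $f$ multihomogeneous and pick a letter $x_{k_0}$ with $\deg_{x_{k_0}}(f^{-})\ge1$. Writing $\partial_q=\sum_k D_k$, where $D_k$ polarizes only $x_k$ into $y_{k,q}$, the images $D_k(f)$ are distinguished by the unique fresh letter $y_{k,q}$ they carry and so have disjoint supports; hence it suffices to show $D_{k_0}(f)\neq0$. Every monomial of $D_{k_0}(f)$ carries exactly one $y_{k_0,q}$, lying in a single trace factor $\tr(b^{i}\,\partial_q(b))$ produced from a differentiated factor $\si_l(b)$ with $x_{k_0}$ occurring in $b$.

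The heart of the proof is to single out, among the factors containing $x_{k_0}$, one of maximal $\si$-subscript and to differentiate it fully (taking $i=l-1$, so that $\si_{l-i-1}=\si_0=1$), exactly as in the proofs of Lemmas~\ref{lemma_linind1} and~\ref{lemma_linind2}. The resulting monomial is the leading one under a suitable ordering, and the assignment of this leading monomial to a monomial of $f$ is injective, so the globally maximal leading monomial admits a unique preimage and cannot cancel. The main obstacle is precisely this non-cancellation, i.e. excluding accidental coincidences of the leading monomial. Two kinds occur: coincidences coming from different primitive words, ruled out by the subword analysis of Lemma~\ref{lemma_subword} together with Remark~\ref{remark_last}; and coincidences among different $\si$-powers of one word, governed by the single-word linear independence of Lemmas~\ref{lemma_linind1} and~\ref{lemma_linind2}. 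The delicate case, and the reason $p\neq2$ is indispensable, is $b\stackrel{c}{\sim}b^{T}$: there the two mirror contributions coincide and the relevant coefficient equals $2(-1)^{v}m$ with $1\le m<p$, hence invertible since $p$ is prime; for $p\neq2$ this is nonzero (Lemma~\ref{lemma_linind2}), the leading monomial survives and the induction closes, whereas for $p=2$ it vanishes and the derivative can collapse to zero, consistently with Conjecture~\ref{conj1}.
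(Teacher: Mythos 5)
Your proof is correct and follows essentially the same route as the paper: induction on $\degm{\,\cdot\,}$ with the base case read off from Definition~\ref{def_p_multilin}, formula~\Ref{eq_3} to keep the derivations off the $p$-th power part, and non-vanishing of $\partial_q$ established through the maximal-subscript/unique-preimage analysis of Lemmas~\ref{lemma_linind1} and~\ref{lemma_linind2} combined with Remark~\ref{remark_last} to separate distinct primitive words, with $p\neq2$ entering exactly where the coefficient $2(-1)^v m_{iu}$ appears. Your extra reductions (multihomogeneity and the splitting $\partial_q=\sum_k D_k$) are harmless repackagings of the paper's grouping of factors $h_{w,i}$ by primitive word.
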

\begin{proof} Let $q>0$. Consider $h=\sum_{w\in\Omega}\al_w h_w\in \widehat{\N}_{\si}$
for non-zero elements $\al_w\in\FF$ and pairwise different monomials $f_w\in \widehat{\N}_{\si}$. Let $h$ be in $\widehat{\N}_{\si}(q-1)$ and  
\begin{eq}\label{eq_cond}
\deg_{y_{ks}}(h_w^{+})= 0\text{ and } \deg_{y_{ks}}(h_w^{-})\leq 1 
\end{eq}%
for all $1\leq k\leq d$, $s>0$. Then we claim that one the following possibilities holds: 
\begin{enumerate}
\item[$\bullet$] $h$ is a $p$-multilinear; 

\item[$\bullet$] $h'=\partial_q(h)$ is non-zero, $h'$ satisfies condition~\Ref{eq_cond} and $\degm{h'}<\degm{h}$.
\end{enumerate}

If $\degm{h}=0$, then $h$ is $p$-multilinear for $I=\{x_1,\ldots,x_d\}$.

We assume that $\degm{h}>0$. Let $\{a_1,\ldots,a_s\}$ be a subset of $\widehat{\N}$ such that for every $w\in\Omega$ we have
$$h_w=\prod_{1\leq i\leq s} h_{w,i},$$
where $h_{w,i}$ is a product of some elements of the set $\{\si_t(a_i)\,|\, t>0\}$ or $h_{w,i}=1$. Given $1\leq i\leq s$, denote by $\Theta_{i}$ the set of $w\in\Omega$ with $\deg_{x_k}(h^{-}_{w,i})\neq 0$ for some $1\leq k\leq d$. 

Since $\degm{h}>0$, the set $\Theta_{i_0}$ is not empty for some $1\leq i_0\leq s$. Using formula~\Ref{eq_3} and the equality $h_w^{-}=\prod_{i=1}^s h_{w,i}^{-}$, we obtain that for every $w\in\Omega$  
\begin{eq}\label{eq_5}
\partial_q(h_{w})=h_w^{+}\sum h_{w,1}^{-}\cdots h_{w,i-1}^{-}\partial_q(h_{w,i}^{-}) h_{w,i+1}^{-}\cdots h_{w,s}^{-},
\end{eq}%
where the sum ranges over $1\leq i\leq s$ satisfying $w\in\Theta_i$. Note that if  $w\not\in\Theta_i$ for all $i$, then $\partial_q(h_{w})=0$. Applying Lemmas~\ref{lemma_linind1} and~\ref{lemma_linind2}, we obtain that $\{\partial_q(h_{w,i_0}^{-})\,|\,w\in \Theta_{i_0}\}$ are linear independent over $\FF$. Thus, the definition of $h_{w,i}$ together with Remark~\ref{remark_last} implies that $h'=\partial_q(h)\neq0$. It follows from formula~\Ref{eq_5} that $h'$ satisfies  condition~\Ref{eq_cond} and $\degm{h'}<\degm{h}$. Therefore, the claim is proven.

Applying the claim to $f$, $\partial_1(f)$, $\partial_2(\partial_1(f))$ and so on, we prove the required statement by induction on $\degm{f}$.
\end{proof}

\begin{cor}\label{cor_reduction}
Let $G$ be $O(n)$ or $\Sp(n)$ and $p\neq2$. Assume that $f\in \N_{\si}$ is a non-zero free relation and $d>\!\!>\!0$ is large enough. Then there exists a non-zero $p$-multilinear free relation $h\in\N_{\si}$ with $\degp{h}\leq\degp{f}$.
\end{cor}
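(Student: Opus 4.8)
The plan is to combine the two main results of this section. Lemma~\ref{lemma_p_multilin} manufactures $p$-multilinearity by repeatedly applying the derivations $\partial_1,\partial_2,\dots$, while Lemma~\ref{lemma_diagram} guarantees that each such derivation sends a free relation to a free relation. Running both observations in tandem on the given free relation $f$ will produce a non-zero $p$-multilinear free relation living in $\widehat{\N}_{\si}$; a final relabelling of the auxiliary $y$-variables as fresh generic matrices will bring it back into $\N_{\si}$.

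Concretely, first I would apply Lemma~\ref{lemma_p_multilin} to $f\in\N_{\si}$ (here the hypothesis $p\neq2$ is used): there is a $q\geq0$ for which
$$g=\partial_q\cdots\partial_1(f)\in\widehat{\N}_{\si}$$
is non-zero and $p$-multilinear. Next, since $f$ is a free relation and each $\partial_i$ carries $\widehat{\N}_{\si}(i-1)$ into $\widehat{\N}_{\si}(i)$, iterating the last assertion of Lemma~\ref{lemma_diagram} shows that $\partial_1(f),\partial_2\partial_1(f),\dots,g$ are all free relations. Hence $g$ is a non-zero $p$-multilinear free relation, but a priori only in $\widehat{\N}_{\si}$.

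To return to $\N_{\si}$, note that $g$ involves only finitely many of the auxiliary letters $y_{ks},y_{ks}^T$; choosing $d$ large enough, I would relabel each such $y_{ks}$ as a fresh letter $x_{k'}$ not occurring in $g$, which defines an injective homomorphism $\rho\colon\widehat{\N}_{\si}\to\N_{\si}$. Because $\widehat{\Psi}_n$ treats the generic matrices $Y_{k,q}$ exactly as it treats the $X_k$, the map $\rho$ is compatible with the evaluation maps (it amounts to a mere relabelling of coordinates identifying $Y_{k,q}$ with a fresh $X_{k'}$); consequently $\Psi_n(\rho(g))=0$ for every $n$ whenever $\widehat{\Psi}_n(g)=0$, so $h=\rho(g)$ is again a free relation. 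Relabelling distinct letters by distinct fresh letters preserves the monomial structure, hence $h$ remains non-zero and $p$-multilinear.

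It remains to check the degree bound $\degp{h}\leq\degp{f}$. Since $\rho$ does not alter the partition of a monomial into its $p$-th-power part and its remainder, it preserves $\degp{\cdot}$, so it suffices to see that no $\partial_i$ increases $\degp{\cdot}$. This follows from formula~\Ref{eq_3}, namely $\partial_q(f_w)=f_w^{+}\,\partial_q(f_w^{-})$ for each monomial $f_w$: the derivation acts only on $f_w^{-}$, whose factors all have exponent $<p$, so it introduces no new $p$-th-power factors and leaves $f_w^{+}$ untouched; thus every monomial of $\partial_q(f)$ has $p$-th-power part of degree at most $\deg(f_w^{+})\leq\degp{f}$. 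The main obstacle is precisely this relabelling step: one must verify carefully that the symmetric roles of the $x$- and $y$-variables under $\widehat{\Psi}_n$ let one transport the free-relation property along $\rho$, and that $d$ is taken large enough to absorb all the $y$-letters created by the chain of derivations.
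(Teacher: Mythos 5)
Your proposal is correct and follows essentially the same route as the paper: apply Lemma~\ref{lemma_p_multilin} to obtain a non-zero $p$-multilinear $\partial_q\cdots\partial_1(f)$, invoke Lemma~\ref{lemma_diagram} to keep the free-relation property, use equality~\Ref{eq_3} for the bound on $\degp{\cdot}$, and relabel the $y_{ks}$ as unused $x_j$ (which is exactly the injective map $\varphi$ in the paper's proof). The extra care you take in justifying that the relabelling is compatible with the evaluation maps is a reasonable elaboration of a step the paper leaves implicit.
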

\begin{proof} 
Applying Lemma~\ref{lemma_p_multilin}, we obtain $q\geq0$ such that $f'=\partial_q\cdots \partial_1(f)$ is a non-zero $p$-multilinear element of $\widehat{\N}_{\si}$. By Lemma~\ref{lemma_diagram}, $f'$ is a free relation. Equality~\Ref{eq_3} implies that  $\degp{f'}\leq\degp{f}$.

Let $f'=\sum\al_i f_i$ for non-zero $\al_i\in\FF$ and pairwise different monomials $f_i$. Since $d$ is large enough, there is an injective map $\varphi$ from the set of $y_{ks}$ satisfying $\deg_{y_{ks}}(f_i)\neq0$ for some $i$ ($1\leq k\leq d$, $s>0$) to the set of $x_j$ satisfying $\deg_{x_j}(f_i)=0$ for all $i$ ($1\leq j\leq d$). Making substitutions $y_{ks}\to\varphi(y_{ks})$ and $y_{ks}^T\to\varphi(y_{ks})^T$ in $f'$, we obtain the required $h\in\N_{\si}$.
\end{proof}

\section{Multilinear free relations}\label{section_O_lin}

We assume that $G$ is $O(n)$ or $\Sp(n)$. Given an $n\times n$ matrix $A=(f_{ij})_{1\leq i,j\leq n}$ over $R$, we denote 
$$A^{(p)}=(f_{ij}^p)_{1\leq i,j\leq n}.$$

\begin{remark}\label{remark_p}
Let $\algA$ be a commutative $\FF$-algebra and $p>0$. Then for $a_1,\ldots,a_r\in\algA$ we have $(a_1+\cdots+a_r)^p=a_1^p+\cdots+a_r^p$. 
\end{remark}

\begin{lemma}\label{lemma_power_p} For $n\times n$ matrices $A$ and $B$ over $R$ the following properties hold:
\begin{enumerate}
\item[a)] $(AB)^{(p)}=A^{(p)}B^{(p)}$;

\item[b)] $\si_t(A)^{p}=\si_t(A^{(p)})$ for $1\leq t\leq n$;

\item[c)] if $n$ is even, then $(A^{\ast})^{(p)}=(A^{(p)})^{\ast}$.
\end{enumerate}
\end{lemma}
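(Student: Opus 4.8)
The plan is to treat the three parts in the order (a), (c), (b), because (b) for the symplectic case will lean on (c), and in any event (a) is the computational heart that makes the rest routine. Throughout, the key elementary fact is the Frobenius identity recorded in Remark~\ref{remark_p}: in a commutative $\FF$-algebra of characteristic $p>0$ the $p$-th power is additive, $(a_1+\cdots+a_r)^p=a_1^p+\cdots+a_r^p$. Since every entry of the matrices involved lives in the commutative ring $R$, this is exactly the tool that lets the $p$-th power pass through sums, and hence through the sum-of-products that defines matrix multiplication and the determinant-type expressions $\si_t$.

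\medskip
\noindent\emph{Part a).} I would write out the $(i,j)$-th entry of $(AB)^{(p)}$, which by definition is $\big(\sum_{k=1}^n f_{ik}h_{kj}\big)^p$, where $A=(f_{ij})$ and $B=(h_{ij})$. Applying Remark~\ref{remark_p} to the sum over $k$, and then using that $\FF$ has characteristic $p$ so $(f_{ik}h_{kj})^p=f_{ik}^p h_{kj}^p$ in the commutative ring $R$, this equals $\sum_{k=1}^n f_{ik}^p h_{kj}^p$. But that is precisely the $(i,j)$-th entry of $A^{(p)}B^{(p)}$, so part~a) follows immediately. An inductive consequence I would flag for later use is $(A^m)^{(p)}=(A^{(p)})^m$ and, combined with the obvious entrywise identity $(A^T)^{(p)}=(A^{(p)})^T$, that the Frobenius operation $A\mapsto A^{(p)}$ commutes with the transpose.

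\medskip
\noindent\emph{Part c).} Recall $A^{\ast}=-J A^T J$. Since the entries of $J$ are $0,\pm1$, and in characteristic $p$ we have $(\pm1)^p=\pm1$ (here $p$ may be $2$, but $(-1)^p=-1$ still holds as $-1=1$ then), we get $J^{(p)}=J$ and likewise $(-J)^{(p)}=-J$. Applying part~a) twice to the product $(-J)A^T J$ together with $(A^T)^{(p)}=(A^{(p)})^T$ gives $(A^{\ast})^{(p)}=(-J)\,(A^{(p)})^T\,J=(A^{(p)})^{\ast}$, which is part~c). The hypothesis that $n$ is even is only needed so that $A^{\ast}$ is defined at all (it is the symplectic transpose attached to the even-dimensional form $J$).

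\medskip
\noindent\emph{Part b).} Starting from the explicit formula~\Ref{eq_4} for $\si_t(A)$ as the signed sum over $t$-subsets $\{i_1<\cdots<i_t\}$ and permutations $\tau\in S_t$ of the products $f_{i_1,i_{\tau(1)}}\cdots f_{i_t,i_{\tau(t)}}$, I would raise to the $p$-th power and push the power through the outer sum by Remark~\ref{remark_p}. The one subtlety is the sign $\sign(\tau)\in\{\pm1\}$: raising a signed term to the $p$-th power gives $\sign(\tau)^p=\sign(\tau)$ (using $(-1)^p=-1$ for odd $p$ and $-1=1$ for $p=2$), so the signs are preserved. Then distributing the $p$-th power across the product of the $f$'s in the commutative ring $R$ turns each $f_{i_k,i_{\tau(k)}}$ into $f_{i_k,i_{\tau(k)}}^p$, and the result is exactly formula~\Ref{eq_4} evaluated at the matrix $A^{(p)}$, i.e. $\si_t(A^{(p)})$. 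The only place demanding genuine care is this sign bookkeeping together with the interchange of the $p$-th power with the summation; both are immediate from Remark~\ref{remark_p} and the characteristic-$p$ arithmetic of $\{\pm1\}$, so I do not expect a real obstacle here. (An alternative route to part~b), avoiding the explicit expansion, is to note that $\si_t$ is a polynomial with integer coefficients in the entries and invoke the Frobenius homomorphism on $R$ directly; I would use whichever is cleaner in context, but the expansion via~\Ref{eq_4} keeps everything self-contained.)
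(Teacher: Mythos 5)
Your proposal is correct and follows essentially the same route as the paper: part~a) by the entrywise expansion of $AB$ together with the Frobenius additivity of Remark~\ref{remark_p}, part~b) from formula~\Ref{eq_4} and the same remark, and part~c) as a consequence of part~a). The only cosmetic difference is your reordering of the parts (the paper proves them in order, and part~b) does not actually need part~c)), which changes nothing of substance.
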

\begin{proof}
We set $A=(f_{ij})_{1\leq i,j\leq n}$ and $B=(h_{ij})_{1\leq i,j\leq n}$. Then $(i,j)^{\rm th}$ entry of $AB$ is $(\sum_{k=1}^n f_{ik} h_{kj})^p$ and Remark~\ref{remark_p} completes the proof of part~a). Part~b) follows from formula~\Ref{eq_4} and Remark~\ref{remark_p}. Part~c) follows from part~a).
\end{proof}

\begin{lemma}\label{lemma_multilin}
Assume that $p\neq 2$, $f\in \N_{\si}$ is a non-zero $p$-multilinear free relation and $d>\!\!>\!0$ is large enough. Then there exists a non-zero multilinear free relation in $\N_{\si}$. 
\end{lemma}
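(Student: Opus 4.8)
The plan is to remove the $p$-th power part $f^{+}$ of the relation by a Frobenius-type substitution, which drops $\degp{f}$ to zero, and then to re-run the reduction of the previous section to reach multilinearity. Throughout write $f=\sum_w \al_w f_w$ with $f_w=f_w^{+}f_w^{-}$, and recall that $f$ being $p$-multilinear produces a set $I\subset\{x_1,\dots,x_d\}$ such that the letters occurring in the $f_w^{+}$ lie in $I$ while each $f_w^{-}$ is free of the letters of $I$.

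First I would record the elementary fact that $f_w^{+}=\si_{t_1}^p(a_1)\cdots\si_{t_r}^p(a_r)$ equals $g_w^{\,p}$ in $\N_{\si}$, where $g_w=\si_{t_1}(a_1)\cdots\si_{t_r}(a_r)$ is a product of pairwise distinct $\si$-generators each of exponent $1$. Fixing $n$, part~b) of Lemma~\ref{lemma_power_p} gives $\Psi_n(g_w)^p=\prod_j\si_{t_j}(X_{a_j}^{(p)})$, and parts~a),~c) show that $X_{a_j}^{(p)}$ is obtained from $X_{a_j}$ by replacing each $X_k$, $X_k^T$, $X_k^{\ast}$ by $(X_k)^{(p)}$, $((X_k)^{(p)})^T$, $((X_k)^{(p)})^{\ast}$. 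Consequently
$$\Psi_n(f)=\sum_w\al_w\,\Psi_n(g_w)^p\,\Psi_n(f_w^{-})$$
lies in the subring $\FF[x_{ij}(k)^p\,:\,k\in I]\otimes\FF[x_{ij}(k)\,:\,k\notin I]$ of $R$, the first factor being a genuine polynomial ring in the algebraically independent $p$-th powers.

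Next comes the substitution. Since $d\gg0$, I can choose fresh indices $k'$ and define the ring homomorphism $\theta_n$ on that subring by $x_{ij}(k)^p\mapsto x_{ij}(k')$ for $k\in I$ (introducing a fresh generic matrix $X_{k'}$) and $x_{ij}(k)\mapsto x_{ij}(k)$ for $k\notin I$. The content of Lemma~\ref{lemma_power_p} is exactly that $\theta_n$ carries $\Psi_n(g_w)^p$ to $\Psi_n(\tilde g_w)$, where $\tilde g_w$ is $g_w$ with every letter $x_k$ ($k\in I$) relabelled to $x_{k'}$. Applying $\theta_n$ to $\Psi_n(f)=0$ therefore yields $\Psi_n(h)=0$ with $h=\sum_w\al_w\,\tilde g_w\,f_w^{-}\in\N_{\si}$; as this holds for all $n$, the element $h$ is a free relation. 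Two checks complete this step. First, $h\neq0$: distinct $f_w$ give distinct pairs $(g_w^{\,p},f_w^{-})$, and since the relabelling $g_w\mapsto\tilde g_w$ is a bijection onto $\si$-generators in fresh letters disjoint from those of the $f_w^{-}$, the monomials $\tilde g_w f_w^{-}$ are pairwise distinct and no cancellation occurs. Second, $\degp{h}=0$: each $\tilde g_w$ is a product of distinct $\si$-generators of exponent $1$, each $f_w^{-}$ has all exponents $<p$, and the two involve disjoint letters, so no $\si$-generator appears in $h_w$ with exponent $\geq p$, i.e.\ $h_w^{+}=1$.

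Finally I would feed $h$ into Corollary~\ref{cor_reduction}: since $h$ is a non-zero free relation and $d\gg0$, it yields a non-zero $p$-multilinear free relation $h'\in\N_{\si}$ with $\degp{h'}\leq\degp{h}=0$. A $p$-multilinear element with $\degp{}=0$ has every $f_w^{+}=1$, hence $f_w=f_w^{-}$, and the defining conditions then force $\deg_z(f_w)\leq1$ for every letter $z$; thus $h'$ is multilinear, as required. I expect the main obstacle to be the careful construction of $\theta_n$ — recognizing that $\Psi_n(f)$ lives in the tensor product of the ring of $p$-th powers with the untouched variables, and using all three parts of Lemma~\ref{lemma_power_p} to identify the image of the $p$-th power part with an honest invariant of fresh generic matrices — together with the non-cancellation argument ensuring $h\neq0$.
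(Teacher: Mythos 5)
Your overall strategy --- stripping the $p$-th--power part $f^{+}$ by a Frobenius-type substitution justified by Lemma~\ref{lemma_power_p} and then invoking Corollary~\ref{cor_reduction} --- is exactly the paper's, and your construction of the free relation $h$ (including the non-cancellation argument and the use of fresh letters, which is harmless since $d\gg0$) is sound. The genuine gap is the opening claim that $f_w^{+}=g_w^{\,p}$ with $g_w$ a product of \emph{pairwise distinct} $\si$-generators \emph{each of exponent one}, from which you deduce $\degp{h}=0$ and hence that a single round suffices. The decomposition~\Ref{eq_2} does not guarantee this: it only forces each exponent in $f^{-}$ to be $<p$, while a generator may occur in $f_w$ with exponent $\geq 2p$. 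If, say, $f_w^{+}=\si_t(a)^{p^2}$, then $g_w=\si_t(a)^{p}$, the relabelled factor $\si_t(\tilde a)^{p}$ lands in $h_w^{+}$, and $\degp{h}>0$. Consequently the element $h'$ produced by Corollary~\ref{cor_reduction} is in general only $p$-multilinear, not multilinear, and your argument stops one step too early.

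The repair is exactly what the paper does: one only has $\degp{h}<\degp{f}$ (indeed $\deg(h_w^{+})\leq\deg(f_w^{+})/p$), so one must feed the resulting non-zero $p$-multilinear free relation back into the same construction and induct on $\degp{f}$, a non-negative integer that strictly decreases at each round. The process terminates at a relation with vanishing $\degp{}\!$, where your final observation correctly shows that $p$-multilinearity degenerates to multilinearity. Apart from this missing iteration (and the cosmetic difference that the paper substitutes $x_{ij}^p(k)\to x_{ij}(k)$ in place rather than relabelling to fresh letters), your proof coincides with the paper's.
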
 
\begin{proof} Without loss of generality we can assume that $p>0$.  
Let $f=\sum_{w\in\Omega}\al_w f_w\in \N_{\si}$ be not multilinear, where $\al_w\in\FF$ is non-zero and $f_w$ is a monomial.  Note that $f_w^{+}=h_w^p$ for some $h_w\in \N_{\si}$. Definition~\ref{def_p_multilin} implies that there is a set $I\subset\{1,\ldots,d\}$ such that for every $w$ the element $f_w^{+}$ ``depends"{} only on $\{x_k\,|\,k\in I\}$ whereas $f_w^{-}$ ``depends"{} only on $\{x_k\,|\,k\not\in I\}$.  Hence $h=\sum_{w\in\Omega}\al_w h_w f_w^{-}$ is a non-zero element of $\N_{\si}$ satisfying $\degp{h}<\degp{f}$. 

Given $n>0$, we have $\Psi_n(f)=0$. By Remark~\ref{remark_p}, $\Psi_n(h_w^{p})$ is a polynomial in $x_{ij}^p(k)$, where $k\in I$ and $1\leq i,j\leq n$. It follows from Lemma~\ref{lemma_power_p} that the result of substitution $x_{ij}^p(k)\to x_{ij}(k)$ ($k\in I$, $1\leq i,j\leq n$) in $\Psi_n(h_w^{p})$ is $\Psi_n(h_w)$. Thus, applying the mentioned substitution to $\Psi_n(f)=0$ we obtain $\Psi_n(h)=0$. Therefore, $h$ is a free relation.

Applying Corollary~\ref{cor_reduction} to $h$, we obtain a non-zero $p$-multilinear free relation $f'$ satisfying $\degp{f'}\leq\degp{h}$. Repeating this procedure several times and using the fact that $\degp{f}$ decreases at each step by at least one, we finally obtain a non-zero multilinear free relation.  
\end{proof}

\begin{lemma}\label{label_no_multilin}
There is no a non-zero multilinear free relation in $\N_{\si}$ for $p\geq0$.
\end{lemma}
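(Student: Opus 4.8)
The plan is to show first that multilinearity forces a very special shape on $f$, and then that the surviving products of traces are linearly independent once the matrix size $n$ is taken large enough; since a free relation vanishes for \emph{all} $n$, this independence in the stable range forces every coefficient to vanish.

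First I would record the restriction imposed by multilinearity. Write $f=\sum_{w}\al_w f_w$ with $f_w=\prod_j \si_{t_j}(a_j)$, $a_j\in\N$. For a letter $z$ one has $\deg_z(f_w)=\sum_j t_j\deg_z(a_j)$, so the condition $\deg_z(f_w)\leq1$ forces two things: any factor whose word $a_j$ actually contains $z$ must have $t_j=1$, and no letter can occur in two different factors. Hence every monomial $f_w$ is a product of traces $\si_1(a_1)\cdots\si_1(a_m)=\tr(a_1)\cdots\tr(a_m)$, where the primitive words $a_1,\dots,a_m$ use pairwise disjoint sets of letters and each $x_k$ (either as $x_k$ or as $x_k^T$) occurs at most once. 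Modulo the equivalences $\sim$ and $\stackrel{c}{\sim}$ that define $\N_\si$, the monomials $f_w$ are thereby in bijection with a purely combinatorial family of diagrams: partitions of the occurring letters into cyclically ordered words, each edge labelled by some $x_k$ or $x_k^T$.

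Next I would pass to operators on tensor space. Fix $n$, put $V=\FF^n$, and note that each product of traces $\prod_j\tr(X_{a_j})$ equals the total trace $\tr_{V^{\otimes\bullet}}(D_w)$ of a single operator $D_w$ assembled from the permutation operators and, in the $O(n)$ and $\Sp(n)$ cases, from the contraction operators associated with the defining bilinear (resp. skew) form; the latter is exactly what the transposes $X_k^T$, resp. the symplectic transposes $X_k^\ast$, contribute. Because the pairing $\langle A,B\rangle=\tr(AB)$ on matrices over $\FF$ is nondegenerate in every characteristic, the polynomial identity $\Psi_n(f)=0$ in the entries of $X_1,\dots,X_d$ is equivalent to the operator identity $\sum_w\al_w D_w=0$ on the relevant tensor power of $V$. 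Finally, since $f$ is a free relation of fixed degree, I may choose $n$ larger than $\deg f$ and evaluate $\sum_w\al_w D_w$ on basis tensors $e_{i_1}\otimes\cdots$ whose indices are all distinct, which is possible precisely because $n$ is large; this separates the diagrams one from another, shows the $D_w$ are linearly independent, and forces every $\al_w=0$, i.e. $f=0$. In the purely $GL(n)$ situation this is just the stable linear independence of the permutation operators (Schur--Weyl in the range $n\geq d$), and the argument is manifestly characteristic-free, covering all $p\geq0$.

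The main obstacle is the last step in the $O(n)$ and $\Sp(n)$ setting: one must set up the correspondence between multilinear monomials of $\N_\si$ and diagrams so carefully that the involution ${}^T$, the equivalences $\sim$ and $\stackrel{c}{\sim}$, and primitivity are matched exactly by the symmetries of the diagrams (so that distinct monomials really give distinct diagrams), and then verify that distinct diagrams are separated by a single distinct-index basis tensor. This is where the defining form genuinely enters, since the contraction edges can collide with permutation edges, and where the bookkeeping must guarantee that no two surviving monomials evaluate identically.
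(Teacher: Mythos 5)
Your opening reduction is correct and matches what the paper uses implicitly: since $\deg_z(\si_t(a))=t\deg_z(a)$, multilinearity forces every factor to be a plain trace, so each monomial is $\tr(a_1)\cdots\tr(a_m)$ with the words using pairwise disjoint letters. Your overall strategy --- isolate one monomial by a well-chosen evaluation once $n$ is in the stable range --- is also the same idea as the paper's. But the concrete separation step you propose fails in exactly the case the lemma is needed for. For $G=O(n)$ or $\Sp(n)$, a trace monomial containing a transposed letter corresponds, under your identification, to a diagram with a contraction edge: e.g.\ $\tr(X_1X_2^T)=\sum_{i,j}(X_1)_{ij}(X_2)_{ij}$ corresponds to the operator $e_{i}\otimes e_{j}\mapsto \delta_{ij}\sum_k e_k\otimes e_k$ on $V\otimes V$. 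Such an operator \emph{annihilates every basis tensor with pairwise distinct indices}, and so do all diagrams containing at least one contraction. Hence evaluating $\sum_w\al_w D_w$ on distinct-index tensors does not separate these diagrams; it kills them all simultaneously, and you can only conclude that the coefficients of the transpose-free monomials vanish. This is not a bookkeeping issue, as your last paragraph suggests, but a wrong choice of test vectors.

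The repair is to adapt the evaluation point to the \emph{target} monomial $f_u=\tr(a_1)\cdots\tr(a_r)$: choose indices that are equal precisely along the pairs of the perfect matching determined by $f_u$ (equivalently, along its cycles) and distinct otherwise, which is possible for $n\geq d$; then $f_u$ evaluates to $1$ while any other matching must pair two slots that $f_u$ does not, picks up a Kronecker delta between distinct indices, and evaluates to $0$. This is exactly what the paper does directly at the level of the polynomial invariant, substituting elementary matrices $Z_i=e_{i,i+1}$, $Z_s=e_{s,1}$ that trace out the cycles of $f_u$ (with $n=2d$ and the observation $e_{ij}^{\ast}=e_{j+d,i+d}$ handling the symplectic transposes). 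With that substitution your argument closes, and the homogeneity reduction (each $x_k$ occurring exactly once, available since $\FF$ is infinite) should be stated before choosing $n=d$ or $n=2d$.
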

\begin{proof} We assume that $f=\sum_{w\in\Omega}\al_w f_w\in \N_{\si}$ is a non-zero multilinear free relation for non-zero $\al_w\in\FF$ and pairwise different monomials $f_w$. Since $\FF$ is infinite, without loss of generality we can assume that $f$ is homogeneous with respect to $\NN^d$-grading of $\N_{\si}$, i.e., $\deg_{x_1}(f_w)=\cdots=\deg_{x_d}(f_w)=1$ for all $w$. 

We set $n=d$ in case $G$ is the orthogonal group and $n=2d$ in case $G$ is the symplectic  group. Denote by $e_{i,j}$ the $n\times n$ matrix whose $(i,j)^{\rm th}$ entry is $1$ and any other entry is $0$. Let $u\in\Omega$ and $f_u=\tr(a_1)\cdots\tr(a_r)$ for some $a_1,\ldots,a_r\in\N$. Given $a_1=z_1\cdots z_s$, $a_2=z_{s+1}\cdots z_{l}$, and so on, where $z_1,\ldots,z_l$ are letters, we set $Z_i=e_{i,i+1}$ for $1\leq i<s$ and $Z_s=e_{s,1}$. Similarly, we define $Z_i=e_{i,i+1}$ for $s+1\leq i<l$ and $Z_l=e_{l,s+1}$.
Considering $a_3,\ldots,a_r$, we define $Z_i$ for all $l<i\leq d$ as above. 

Note that  in the symplectic case $e_{ij}^{\ast}=e_{j+d,i+d}$ for $1\leq i,j\leq d$. Hence in both cases the result of substitutions 
$$x_{ij}(k)\to (i,j)^{\rm th} \text{ entry of }Z_k \;\;(1\leq k\leq d) $$ 
in $\Psi_n(f_w)$ is zero for $w\neq u$ and one for $w=u$. Since $f$ is a free relation, we have $\Psi_n(f)=0$. Thus we obtain $\al_u=0$; a contradiction.
\end{proof}

We now can prove Theorem~\ref{theo_free}:
\begin{proof}
Let $f$ be a non-zero free relation. Obviously, without loss of generality we can assume that $d$ is large enough. Then Corollary~\ref{cor_reduction} and Lemmas~\ref{lemma_multilin},~\ref{label_no_multilin} imply a contradiction.
\end{proof}

\begin{remark}\label{remark_Donkin}
In case $G=GL(n)$ we can repeat the proof of Theorem~\ref{theo_free} without reference    
to Lemma~\ref{lemma_linind2}, where the restriction $p\neq 2$ is essential. As the result, we obtain that there is no free relations for $R^{GL(n)}$ for an arbitrary $p$. 
\end{remark}

\begin{remark}\label{remark_conj}
Let $p=2$ and $G=\Sp(n)$. By straightforward calculations we can see that $\tr(AA^{\ast})=0$ for every $n\times n$ matrix $A$ over $R$. By part~b) of Lemma~\ref{lemma_subword00} and part~c) of Lemma~\ref{lemma_subword}, elements $\tr(a)\in\N_{\si}$ with $a\stackrel{c}{\sim} a^T$ are free relations. On the other hand, it is not difficult to see that $\si_2(x_ix_i^T)$ is not a free relation ($1\leq i\leq d$).   
\end{remark}

\section{Invariants of mixed representations of quivers}\label{section_mixed}

A {\it quiver} $\Q=(\Q_0,\Q_1)$ is a finite oriented graph, where $\Q_0$ ($\Q_1$, respectively) stands for the set of
vertices (the set of arrows, respectively). For an arrow $a$, denote by $a'$ its head
and by $a''$ its tail. 
We say that $a=a_1\cdots a_r$ is a {\it path} in $\Q$ (where $a_1,\ldots,a_r\in
\Q_1$), if $a_1''=a_2',\ldots,a_{r-1}''=a_r'$. 
The head of the path $a$ is $a'=a_1'$ and the tail is $a''=a_r''$.  A path
$a$ is called {\it closed} if $a'=a''$.

Given a {\it dimension vector} $\n=(\n_v\,|\,v\in\Q_0)$, we consider 
\begin{enumerate}
\item[$\bullet$] the space $H=\sum_{a\in\Q_1} \FF^{\n_{a'}\times \n_{a''}}\simeq \sum_{a\in\Q_1} \Hom(\FF^{\n_{a''}},\FF^{\n_{a'}})$;

\item[$\bullet$] the coordinate ring $R=\FF[x_{ij}^a\,|\,a\in\Q_1,\,1\leq i\leq \n_{a'},\,1\leq j\leq \n_{a''}]$ of $H$;

\item[$\bullet$] the $\n_{a'}\times \n_{a''}$ {\it generic} matrix $X_a=(x_{ij}^a)$ for every $a\in\Q_1$; 

\item[$\bullet$] the group $GL(\n)=\sum_{v\in\Q_0} GL(\n_v)$, acting on $H$ as the base change, i.e., 
$$g\cdot (h_a)=(g_{a'} h_a g^{-1}_{a''})$$
for $g=(g_v)\in GL(\n)$ and $(h_a)\in H$; this action induces the action of $GL(\n)$ on $R$. 
\end{enumerate}
Given a path $a=a_1\cdots a_r$ with $a_i\in\Q_1$, we write $X_a$ for $X_{a_1}\cdots X_{a_r}$. Donkin~\cite{Donkin94} proved that the algebra of {\it invariants} of representations of $\Q$ 
$$I(\Q,\n)=R^{GL(\n)}$$ 
is the subalgebra of $R$ generated by $\si_t(X_a)$, where $a$ is a closed path in $\Q$ and $1\leq t\leq \n_{a'}$. Moreover, we can assume that $a$ is {\it primitive}, i.e., is not equal to the power of a shorter closed path in $\Q$.

Let $\i:\Q_0\to\Q_0$ be an involution, i.e., $\i^2$ is the identical map, satisfying $\i(v)\neq v$ and $\n_{\i(v)}=\n_v$ for every vertex $v\in\Q_0$. Define 
\begin{enumerate}
\item[$\bullet$] the group $GL(\n,\i)\subset GL(\n)$ by 
$(g_v)\in GL(\n,\i)$ if and only if $g_v g_{\i(v)}^T=E$ for all $v$;

\item[$\bullet$] the {\it double} quiver $\Q^{\D}$ by $\Q_0^{\D}=\Q_0$ and  $\Q_1^{\D}=\Q_1\coprod \{a^T\,|\,a\in \Q_1\}$, 
where $(a^T)'=\i({a''})$, $(a^T)''=\i({a'})$ for all $a\in\Q_1$.  
\end{enumerate}
We set $X_{a^T}=X_a^T$ for all $a\in\Q_1$. Zubkov~\cite{ZubkovI} showed that the algebra of {\it invariants} of {\it mixed} representations of $\Q$
$$I(\Q,\n,\i)=R^{GL(\n,\i)}$$
is the subalgebra of $R$ generated by $\si_t(X_a)$, where $a$ is a closed path in $\Q^D$ and $1\leq t\leq \n_{a'}$. As above, we can assume that $a$ is primitive.  An example of mixed representations of a quiver is given at the end of the section. 

Let $\Q$, $\n$, $\i$ be as above. We write $\M(\Q,\i)$ for the set of all closed paths in $\Q^{\D}$ and $\N(\Q,\i)$ for the subset of primitive paths.  Given a path $a$ in $\Q^{\D}$, we define the path $a^{T}$ in $\Q^{\D}$ and introduce $\sim$-equivalence on $\M(\Q,\i)$ in the same way as in Section~\ref{section_intro}. Denote by $\M_{\FF}(\Q,\i)$ the vector space with the basis $\M(\Q,\i)$ and define $\N_{\si}(\Q,\i)$ in the same way as $\N_{\si}$ have been defined in Section~\ref{section_intro}. 
Consider a surjective homomorphism 
$$\Upsilon_{\n} : \N_{\si}(\Q,\i)\to I(\Q,\n,\i)$$ %
defined by  $\si_t(a) \to \si_t(X_{a})$, if $t\leq \n_{a'}$, and $\si_t(a) \to 0$ otherwise.
Its kernel $K_{\n}(\Q,\i)$ is the ideal of relations for $I(\Q,\n,\i)$. Elements of $K(\Q,\i)=\bigcap_{\m>0} K_{\m}(\Q,\i)$ are called {\it free} relations for $I(\Q,\n,\i)$.

Let $u,v\in\Q_0$ be vertices. We say that $a\in\M_{\FF}(\Q,\i)$ goes from $u$ to $v$ 
if $a=\sum_i\al_i a_i$, where $\al_i\in\FF$ and $a_i\in\M(\Q,\i)$ satisfies $a_i''=u$,  $a_i'=v$. If $a$ goes from $u$ to $u$, then we say that $a$ is {\it
incident} to $u$. 

\begin{lemma}\label{lemma_free_mixed} The ideal $K(\Q,\i)$ of free relations for $I(\Q,\n,\i)$ is zero for an arbitrary $p$.
\end{lemma}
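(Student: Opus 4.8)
The plan is to transplant the whole argument of Sections~\ref{section_O_derivations}--\ref{section_O_lin} from the matrix case to the quiver case, replacing $\M,\N,\N_{\si},R^G,\Psi_n$ by $\M(\Q,\i),\N(\Q,\i),\N_{\si}(\Q,\i),I(\Q,\n,\i),\Upsilon_{\n}$, and replacing the single generic matrix of size $n$ by the family of generic matrices $X_a$ whose sizes are prescribed by $\n$. First I would introduce, for each arrow $a\in\Q_1^{\D}$ and each $q>0$, an auxiliary generic matrix $Y_{a,q}$ of the same size as $X_a$, together with the derivations $\partial_q$ on the enlarged coordinate ring and on $\N_{\si}(\Q,\i)$, defined exactly as in Section~\ref{section_O_derivations}. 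Since every closed path $a$ in $\Q^{\D}$ yields a square matrix $X_a$, the identities of Lemma~\ref{lemma_properties}, the power-map identities of Lemma~\ref{lemma_power_p}, and the analogue of the commutative diagram of Lemma~\ref{lemma_diagram} carry over unchanged, so that $\partial_q$ again sends free relations to free relations.

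The crucial new point, which makes the statement hold for \emph{every} $p$ rather than only for $p\neq2$, is that the self-dual case never occurs. Because $\i$ is fixed-point-free, a closed path of the form $cc^T$ is impossible: the junction between $c$ and $c^T$ would force $c''=(c^T)'=\i(c'')$, contradicting $\i(c'')\neq c''$. Hence the conclusion of part~b) of Lemma~\ref{lemma_subword00} has no positive-length solution among closed paths, i.e. no $b\in\N(\Q,\i)$ satisfies $b=b^T$; by the analogue of part~c) of Lemma~\ref{lemma_subword} this gives $a\stackrel{c}{\not\sim}a^T$ for every $a\in\N(\Q,\i)$. Consequently only the analogue of Lemma~\ref{lemma_linind1} is ever needed, while the analogue of Lemma~\ref{lemma_linind2}---the sole place where the restriction $p\neq2$ was essential---is never invoked. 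This puts us exactly in the situation of Remark~\ref{remark_Donkin}, so the reductions of Corollary~\ref{cor_reduction} and Lemma~\ref{lemma_multilin} go through for arbitrary $p$: from a nonzero free relation one produces, after applying suitable $\partial_q$'s and collapsing the $p$-th powers via the power-map identities, a nonzero multilinear free relation.

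It then remains to establish the quiver analogue of Lemma~\ref{label_no_multilin}: there is no nonzero multilinear free relation. Here I would take $f=\sum_w\al_w f_w$ multilinear and, since $\FF$ is infinite, homogeneous of multidegree $(1,\dots,1)$ in the arrows. Fixing one monomial $f_u=\tr(a_1)\cdots\tr(a_r)$ with the $a_i$ closed paths partitioning the arrows, I would choose a dimension vector $\n$ (with $\n_v=\n_{\i(v)}$ all sufficiently large) and assign to the arrows elementary matrices $e_{i,j}$ so that each $X_{a_i}$ evaluates to a single rank-one cyclic matrix of trace one while $\Upsilon_{\n}(f_w)=0$ for every $w\neq u$; as $f$ is a free relation, $\Upsilon_{\n}(f)=0$ then forces $\al_u=0$, a contradiction. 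The main obstacle is precisely this substitution step: unlike in the one-vertex matrix case, the elementary matrices must be routed through index chains that respect the head/tail incidences of $\Q^{\D}$ and the action of $\i$ on vertices (recall that $X_a$ for an arrow and $X_a^T$ for its transpose live over the paired vertices $v,\i(v)$), and one must verify that the assignment attached to $f_u$ really annihilates every competing monomial. A subsidiary piece of bookkeeping is to guarantee enough ``free'' arrows for the substitutions underlying the reductions, which is arranged by enlarging $\Q$ with dual pairs of arrows and noting that a nonzero free relation for $\Q$ remains one for the enlarged quiver. Combining this non-existence result with the reductions above yields the contradiction proving $K(\Q,\i)=0$ for all $p$.
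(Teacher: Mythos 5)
Your proposal is correct and follows essentially the same route as the paper: the key observation that the fixed-point-freeness of $\i$ rules out closed paths of the form $cc^T$, hence $a\stackrel{c}{\not\sim}a^T$ for all primitive closed paths, so only Lemma~\ref{lemma_linind1} (valid for all $p$) is needed and the whole derivation/multilinearization machinery of Sections~\ref{section_O_derivations}--\ref{section_O_lin} transplants to $\N_{\si}(\Q,\i)$, is exactly the paper's argument. You merely spell out details (the incidence-respecting elementary-matrix substitution, the enlargement of the quiver to get enough free arrows) that the paper leaves implicit in its phrase ``exactly in the same way as we proved Theorem~\ref{theo_free}.''
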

\begin{proof} If $f$ is a free relation, then $f\in K_{\m}(\Q,\i)$ for a dimension vector $\m=(m,\ldots,m)$ of $\Q$, where $m>0$ is arbitrary. By part~b) of Lemma~\ref{lemma_subword00} and part~c) of Lemma~\ref{lemma_subword}, there does not exist a closed path $a$ in $\Q^{\D}$ with $a\stackrel{c}{\sim}a^T$. Hence $f$ does not contain a summand with a multiplier $\si_t(a)$, where $a$ is a closed path in $\Q^{\D}$ with $a\stackrel{c}{\sim} a^T$ and $t>0$. 

Exactly in the same way as we proved Theorem~\ref{theo_free} for $G=O(m)$, we can show that $f=0$. Here we do not use the first part of Lemma~\ref{lemma_linind2}, which holds for $p=2$, but we only need Lemma~\ref{lemma_linind1}, which holds for an arbitrary $p$.    
\end{proof}

Let us recall that the definition of $\sigma_{t,r}$ can be found in Section~3 of~\cite{Lopatin_Orel}.

\begin{theo}\label{theo_mixed} The ideal of relations $K_{\n}(\Q,\i)$ for $I(\Q,\n,\i)\simeq  \N_{\si}(\Q,\i)/K_{\n}(\Q,\i)$ is generated by 
$$\si_{t,r}(a,b,c)\in\N_{\si}(\Q,\i),$$ 
where $t+2r>\n_v$ ($t,r\geq0$), $a,b,c\in\M_{\FF}(\Q,\i)$, $a$ is incident to some vertex $v\in\Q_0$, $b$ goes from $\i(v)$ to $v$, $c$ goes from $v$ to $\i(v)$.
\end{theo}
\begin{proof} As in~\cite{ZubkovII}, we denote by $J(\Q,\i)$ the inverse limit of algebras $$\{I(\Q,\n(1),\i),\varphi_{n(1),n(2)}\,|\,\n(1)\geq\n(2)\},$$%
where $\varphi_{n(1),n(2)}:I(\Q,\n(1),\i)\to I(\Q,\n(2),\i)$ is the natural epimorphism. It is not difficult to see that $J(\Q,\i)\simeq \N_{\si}(\Q,\i)/K(\Q,\i)$.  Lemma~\ref{lemma_free_mixed} implies that $J(\Q,\i)\simeq \N_{\si}(\Q,\i)$. By Theorem~2 of~\cite{ZubkovII}, the kernel of the natural epimorphism $J(\Q,\i)\to I(\Q,\n,\i)$ is generated by elements from Theorem~\ref{theo_mixed}. 
\end{proof}

\begin{example}\label{ex_forms}
Let $(\cdot,\cdot)_1,\ldots,(\cdot,\cdot)_r$ be bilinear forms on $V=\FF^n$ defined by $n\times n$ matrices $A_1,\ldots,A_r$ and $\LA\cdot,\cdot\RA_1,\ldots,\LA\cdot,\cdot\RA_s$ be bilinear forms on the dual space $V^{\ast}$ defined by $n\times n$ matrices $B_1,\ldots,B_s$.  Then $G=GL(n)$ acts on the space 
$$H=\bigoplus_{k=1}^r \FF^{n\times n}\oplus \bigoplus_{l=1}^s \FF^{n\times n}$$ 
of the above mentioned bilinear forms as base change: 
$$g\cdot (A_1,\ldots,A_r,B_1,\ldots,B_s)=(g A_1 g^{T},\ldots,g A_r g^{T},g^{-T}B_1 g^{-1},\ldots,g^{-T}B_s g^{-1}),$$
where $g^{-T}$ stands for $(g^T)^{-1}$. This action induces the action of $GL(n)$ on the coordinate ring 
$$\FF[H]=\FF[x_{ij}(k),\,y_{ij}(l),\,|\,1\leq i,j\leq n,\,1\leq k\leq r,\,1\leq l\leq s].$$
Denote generic matrices by $X_k=(x_{ij}(k))$ and $Y_l=(y_{ij}(l))$. Let $\Q$ be the following quiver
$$\vcenter{
\xymatrix@C=1cm@R=1cm{ %
\vtx{u}\ar@/^/@{<-}[rr]^{a_1,\ldots,a_r} \ar@/_/@{->}[rr]_{b_1,\ldots,b_s}&&\vtx{v}\\
}} \quad,
$$
where there are $r$ arrows from $v$ to $u$ and $s$ arrows in the opposite direction,  
$\i(u)=v$, and $\n=(n,n)$. Then the algebra of invariants $\FF[H]^{GL(n)}$ is isomorphic to $I(\Q,\n,\i)$. By the above mentioned result of Zubkov~\cite{ZubkovI}, $\FF[H]^{GL(n)}$ is generated by $\si_t(Z_1\cdots Z_m)$, where $1\leq t\leq n$ and $Z_i$ is one of the following products:
$$X_k Y_l,\,X_k^T Y_l,\,X_k Y_l^T,\,X_k^T Y_l^T\; (1\leq k\leq r,\,1\leq l\leq s).$$  
Relations between these generators are described by Theorem~\ref{theo_mixed}.
\end{example}


\section*{Acknowledgements}
The final version of the paper was prepared during author's visit to Max-Planck Institute for Mathematics in Bonn. The author is grateful to MPIM for this support.  The research was also supported by RFFI 10-01-00383 and DFG.

\end{document}